\newtheorem{theorem}{Theorem}[section]
\newtheorem{lemma}[theorem]{Lemma}
\newtheorem{proposition}[theorem]{Proposition}
\theoremstyle{definition}
\newtheorem{definition}[theorem]{Definition}
\newtheorem{example}[theorem]{Example}
\theoremstyle{remark}
\numberwithin{equation}{section}
\begin{document}
\title{Distributional point values and delta sequences}
\subjclass[2010]{46F10}
\keywords{Distributional point values, delta sequences}

\begin{abstract}
Recently Sasane \cite{Sasane} defined a notion of evaluating a distribution at
a point using delta sequences. In this paper, we explore the relationship
between generalizations of his definition and the standard definition of
distributional point values. This allows us to obtain a description of
distributional point values via delta sequences and a characterization of when
a distribution is actually a regular distribution given by bounded function.
We also give a characterization of limits in a continuous variable by the
existence of the limits of certain sequences.

\end{abstract}
\author{Ricardo Estrada}
\address{Department of Mathematics\\
Louisiana State University\\
Baton Rouge\\
LA 70803\\
USA}
\email{restrada@math.lsu.edu}
\author{Kevin Kellinsky-Gonzalez}
\address{Department of Mathematics\\
Louisiana State University\\
Baton Rouge\\
LA 70803\\
USA}
\email{kkelli2@lsu.edu}
\maketitle

\section{Introduction\label{Section:Introduction}}

Distributional point values were first defined in one variable by
\L ojasiewics \cite{Lojasiewics}. His definition is given as a distributional
limit over a \emph{continuous} variable. In other words, if $f\in
\mathcal{D}^{\prime}\left(  \mathbb{R}\right)  $ and $x_{0}\in\mathbb{R}$ then
we say that $f$ has a distributional point value, equal to $\gamma,$ at
$x_{0}$ if%
\begin{equation}
\lim_{\varepsilon\rightarrow0}f\left(  x_{0}+\varepsilon x\right)  =\gamma\,,
\label{Int 1}%
\end{equation}
in the distributional sense, that is, if for all test functions $\phi
\in\mathcal{D}\left(  \mathbb{R}\right)  $ we have that%
\begin{equation}
\lim_{\varepsilon\rightarrow0}\left\langle f\left(  x_{0}+\varepsilon
x\right)  ,\phi\left(  x\right)  \right\rangle =\gamma\int_{-\infty}^{\infty
}\phi\left(  x\right)  \,\mathrm{d}x\,. \label{Int 2}%
\end{equation}
Similarly \cite{L2} point values in several variables are defined as a
distributional limit over a continuous variable. Point values have been
studied extensively and are the first step in the study of distributional
asymptotic analysis and of the study of local properties of distributions
\cite{CamposFerreira, EK2002, Peetre, pil, PSV, vla}.

It is possible to find in the literature other definitions of distributional
point values, based on the use of \emph{delta sequences.} For instance, in a
recent study, Sasane \cite{Sasane} uses the alternative definition
\textquotedblleft$f\left(  x_{0}\right)  =\eta$\textquotedblright\ if for all
positive and even test functions $\phi$ with $\int_{-\infty}^{\infty}%
\phi\left(  x\right)  \,\mathrm{d}x=1$ one has%
\begin{equation}
\lim_{n\rightarrow\infty}\left\langle f\left(  x_{0}+x\right)  ,\phi
_{n}\left(  x\right)  \right\rangle =\eta\,, \label{Int 4}%
\end{equation}
where $\left\{  \phi_{n}\right\}  _{n=1}^{\infty}$ is the standard delta
sequence generated by $\phi,$ namely, $\phi_{n}\left(  x\right)  =n\phi\left(
nx\right)  .$ Naturally the question arises if the two definitions are
equivalent. More generally, if $\mathfrak{F}$\ is a family of test functions,
we would like to consider the relationship between the existence of the
distributional point value and the existence of the limit (\ref{Int 4})
whenever the sequence $\left\{  \phi_{n}\right\}  _{n=1}^{\infty}$ belongs to
$\mathfrak{F}.$ Interestingly, the two definitions are \emph{not }equivalent
for many classes of delta sequences, in particular for the family considered
in \cite{Sasane}. Nevertheless, we are able to show that for \emph{some}
classes they are actually equivalent.

In order to study this problem, we start by studying a very general question
about limits. Indeed, in a metric space $X,$ given a function $f:X\setminus
\left\{  x_{0}\right\}  \rightarrow\mathbb{R},$ then the limit
\begin{equation}
\lim_{x\rightarrow x_{0}}f\left(  x\right)  =L\,, \label{Int 5}%
\end{equation}
exists if and only if
\begin{equation}
\lim_{n\rightarrow\infty}f\left(  x_{n}\right)  =L\,, \label{Int 6}%
\end{equation}
for all sequences $\left\{  x_{n}\right\}  _{n=1}^{\infty}$ in $X\setminus
\left\{  x_{0}\right\}  $ that converge to $x_{0}.$ The question we would like
to consider is whether the existence of the limit $\lim_{n\rightarrow\infty
}f\left(  x_{n}\right)  $ for sequences $\left\{  x_{n}\right\}
_{n=1}^{\infty}$ of a certain family implies that (\ref{Int 5}) is satisfied.
In Section \ref{Section: The continuous case},\ we consider the case where
$X=(0,\infty],$ $x_{0}=\infty$ and $f$ continuous, showing that in such cases
the existence of the limit%
\begin{equation}
\lim_{n\rightarrow\infty}f\left(  na\right)  =F\left(  a\right)  \,,
\label{Int 7}%
\end{equation}
for all $a>0$ implies that, in fact, $F$ is a constant function, $F\left(
a\right)  =L,$ for all $a>0,$ and that $\lim_{x\rightarrow\infty}f\left(
x\right)  =L.$ We give examples of other families of sequences for which
$\lim_{x\rightarrow\infty}f\left(  x\right)  =L$ might not hold true. We are
also able to present, in Section \ref{Section: The measurable case}, a
corresponding result when the function $f$ in (\ref{Int 7}) is not necessarily
continuous but just measurable and the limit holds almost everywhere.

The plan of the rest of the article is as follows. Basic results on
distributional point values are briefly discussed in Section
\ref{Section:Preliminaries}, while delta sequences are considered in Section
\ref{Section: Deltasequences}. Section \ref{Section: Several lemmas} gives
several useful results on the characterization of distributions and functions
using normalized positive test functions. The main results are given in
Section \ref{Section: Comparison of definitions}, where we give equivalent
conditions to the existence of point values obtained from a given family of
delta sequences. In particular, we prove that the existence of the
distributional point value is equivalent to the existence of the point value
for\ the family of standard delta sequences generated by a positive normalized
test function. Then we show that for the family employed in \cite{Sasane}\ the
equivalence is the existence of the symmetric distributional point value. We
also consider radial delta sequences in several variables, and finish by
studying the family of all delta sequences of normalized positive test functions.

\section{Preliminaries\label{Section:Preliminaries}}

We refer to the texts for the basic ideas about distributions \cite{AMS,
Kanwal, Schwartz, Vladimirovbook}. Ideas on the local behavior of
distributions can be found in \cite{CamposFerreira, EK2002, pil, PSV,
vla}.\ In this article, we will work mainly in the space $\mathcal{D}^{\prime
}\left(  \mathbb{R}^{d}\right)  $ of distributions on $\mathbb{R}^{d},$ dual
of the space $\mathcal{D}\left(  \mathbb{R}^{d}\right)  $ of standard test
functions, that is, $C^{\infty}$ functions with compact support, with its
inductive limit topology \cite{Treves}.

If $f\in\mathcal{D}^{\prime}\left(  \mathbb{R}\right)  $ and $x_{0}%
\in\mathbb{R}$ then \cite{Lojasiewics}\ we say that $f$ has a
\emph{distributional point value,} equal to $\gamma,$ at $x_{0}$ if
$\lim_{\varepsilon\rightarrow0}f\left(  x_{0}+\varepsilon x\right)  =\gamma,$
in the strong topology of $\mathcal{D}^{\prime}\left(  \mathbb{R}\right)  .$
Equivalently, since a sequence of distributions converges strongly if and only
if it converges weakly \cite{Treves}, if for all test functions $\phi
\in\mathcal{D}\left(  \mathbb{R}\right)  $ we have that%
\begin{equation}
\lim_{\varepsilon\rightarrow0}\left\langle f\left(  x_{0}+\varepsilon
x\right)  ,\phi\left(  x\right)  \right\rangle =\gamma\int_{-\infty}^{\infty
}\phi\left(  x\right)  \,\mathrm{d}x\,. \label{Pre 2}%
\end{equation}

Interestingly, the existence of the distributional limit $\lim_{\varepsilon
\rightarrow0}f\left(  x_{0}+\varepsilon x\right)  $ implies that this limit is
a constant and that the point value exists. On the other hand, if the limit
\begin{equation}
\lim_{\varepsilon\rightarrow0^{+}}f\left(  x_{0}+\varepsilon x\right)
=g\left(  x\right)  \,,\label{Pre 3}%
\end{equation}
exists, then $g$ does not have to be a constant, but it will have the jump
behavior \cite{VEjump}, that is, $g$ is of the form%
\begin{equation}
g\left(  x\right)  =\gamma_{-}H\left(  -x\right)  +\gamma_{+}H\left(
x\right)  \,,\label{Pre 4}%
\end{equation}
where $H$ is the Heaviside function and $\gamma_{\pm}$ are some constants.
Distributions of the form (\ref{Pre 4}) are the most general homogeneous
distributions of degree $0$ in one variable. Alternatively, (\ref{Pre 3}) and
(\ref{Pre 4}) hold if and if the lateral limits $f\left(  x_{0}\pm0\right)
=\lim_{\varepsilon\rightarrow0^{+}}f\left(  x_{0}\pm\varepsilon x\right)
=\gamma_{\pm},$ exist in $\mathcal{D}^{\prime}\left(  0,\infty\right)  $ and
$f$ does not have delta functions at $x_{0}.$

In several variables point values are defined similarly \cite{L2}, namely, if
$f\in\mathcal{D}^{\prime}\left(  \mathbb{R}^{d}\right)  ,$ then the
distributional point value $f\left(  \mathbf{x}_{0}\right)  $ exists and
equals $\gamma$\ if $\lim_{\varepsilon\rightarrow0}f\left(  \mathbf{x}%
_{0}+\varepsilon\mathbf{x}\right)  =\gamma,$ distributionally. In several
variables the limit $\lim_{\varepsilon\rightarrow0}f\left(  \mathbf{x}%
_{0}+\varepsilon\mathbf{x}\right)  $ could exist without being a constant. In
fact, if%
\begin{equation}
\lim_{\varepsilon\rightarrow0^{+}}f\left(  \mathbf{x}_{0}+\varepsilon
\mathbf{x}\right)  =g\left(  \mathbf{x}\right)  \,,\label{Pre 5}%
\end{equation}
then $g$ is homogeneous of degree $0.$ Homogeneous distributions of degree
zero are given by a formula of the type%
\begin{equation}
\left\langle g\left(  \mathbf{x}\right)  ,\phi\left(  \mathbf{x}\right)
\right\rangle =\int_{0}^{\infty}\left\langle \alpha\left(  \mathbf{w}\right)
,\phi\left(  r\mathbf{w}\right)  \right\rangle _{\mathcal{D}^{\prime}\left(
\mathbb{S}\right)  \times\mathcal{D}\left(  \mathbb{S}\right)  }%
r^{d-1}\,\mathrm{d}r\,,\label{Pre 6}%
\end{equation}
for a certain distribution $\alpha\in\mathcal{D}^{\prime}\left(
\mathbb{S}\right)  $ \cite[Thm. 2.6.2]{EK2002}. The distribution $\alpha$ is
the \emph{thick distributional value} \cite{Estrada12} of $f$ at
$\mathbf{x}_{0},$ namely, $f$ has no delta functions at $\mathbf{x}_{0}$ and
$\alpha$ is the thick limit%
\begin{equation}
\lim_{\varepsilon\rightarrow0^{+}}f\left(  \mathbf{x}_{0}+r\varepsilon
\mathbf{w}\right)  =\alpha\left(  \mathbf{w}\right)  \,,\label{Pre 7}%
\end{equation}
in the space $\mathcal{D}^{\prime}\left(  \left(  0,\infty\right)
,\mathcal{D}^{\prime}\left(  \mathbb{S}\right)  \right)  ,$ that is, for all
$\rho\in\mathcal{D}\left(  0,\infty\right)  ,$%
\begin{equation}
\left\langle \lim_{\varepsilon\rightarrow0^{+}}f\left(  \mathbf{x}%
_{0}+r\varepsilon\mathbf{w}\right)  ,\rho\left(  r\right)  \right\rangle
_{\mathcal{D}^{\prime}\left(  0,\infty\right)  \times\mathcal{D}\left(
0,\infty\right)  }=\left(  \int_{0}^{\infty}\rho\left(  r\right)
\,\mathrm{d}r\right)  \alpha\left(  \mathbf{w}\right)  \,,\label{Pre 8}%
\end{equation}

\section{The continuous case\label{Section: The continuous case}}

We start with a general known result that will be useful in our analysis.\smallskip

\begin{proposition}
\label{Prop. DD 1}Let $f:\left(  0,\infty\right)  \rightarrow\mathbb{R}$ be
continuous. Suppose that for each $a>0$ the sequence $\left\{  f\left(
an\right)  \right\}  _{n=1}^{\infty}$ converges, to $F\left(  a\right)  .$
Then $F\left(  a\right)  $ does not depend on $a,$ that is,%
\begin{equation}
F\left(  a\right)  =L\text{ for all }a>0\,, \label{DD 1}%
\end{equation}
for some $L,$ and actually%
\begin{equation}
\lim_{x\rightarrow\infty}f\left(  x\right)  =L\,. \label{DD 2}%
\end{equation}

\end{proposition}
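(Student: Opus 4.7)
The plan is to combine the Baire category theorem with an overlapping-intervals argument. First, set $L = F(1) = \lim_{n \to \infty} f(n)$ and observe that $F$ equals $L$ on every positive rational: writing $a = p/q$ with $p, q \in \mathbb{N}$, the sequence $\{f((kq)a)\}_k = \{f(kp)\}_k$ is a subsequence of $\{f(n)\}$ and so converges to $L$, while by hypothesis the full sequence $\{f(na)\}$ converges to $F(a)$; therefore $F(a) = L$ whenever $a \in \mathbb{Q} \cap (0, \infty)$.

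Next, fix $\varepsilon > 0$ and set
\[
E_{N,\varepsilon} = \{a \in (0, \infty) : |f(na) - f(ma)| \leq \varepsilon \text{ for all } n, m \geq N\}.
\]
Continuity of $f$ makes each $E_{N,\varepsilon}$ closed in $(0, \infty)$, while the Cauchy property of every convergent sequence $\{f(na)\}_n$ forces $\bigcup_{N \in \mathbb{N}} E_{N,\varepsilon} = (0, \infty)$. Since $(0, \infty)$ is a Baire space, some $E_{N_0,\varepsilon}$ has nonempty interior, which in turn contains a closed interval $[c, d]$ with $0 < c < d$. Letting $m \to \infty$ gives
\[
|f(na) - F(a)| \leq \varepsilon \quad \text{for all } a \in [c, d] \text{ and } n \geq N_0.
\]

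The third step transfers $F \equiv L$ from rationals to all of $[c, d]$ with a bounded error. Given $a \in [c, d]$, continuity of $f$ at the single point $aN_0$ produces a $\delta > 0$ such that $|a - a'| < \delta$ implies $|f(aN_0) - f(a'N_0)| \leq \varepsilon$; pick such an $a' \in [c, d] \cap \mathbb{Q}$. Then $F(a') = L$ combined with the Baire bound gives $|f(a'N_0) - L| \leq \varepsilon$, so $|f(aN_0) - L| \leq 2\varepsilon$, and one more application of the Baire bound yields $|F(a) - L| \leq 3\varepsilon$ and hence $|f(na) - L| \leq 4\varepsilon$ for every $a \in [c, d]$ and $n \geq N_0$. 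Finally take $N_1 \geq \max(N_0, c/(d - c))$: then consecutive intervals $[nc, nd]$ overlap for $n \geq N_1$, so $\bigcup_{n \geq N_1}[nc, nd] = [N_1 c, \infty)$. Every $x \geq N_1 c$ therefore admits a representation $x = na$ with $n \geq N_1$ and $a \in [c, d]$, whence $|f(x) - L| \leq 4\varepsilon$. Since $\varepsilon$ was arbitrary, $\lim_{x \to \infty} f(x) = L$, which in particular forces $F(a) = L$ for every $a > 0$.

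The main obstacle is the third step: since $F$ is not known a priori to be continuous, one cannot directly pass from $F \equiv L$ on $[c, d] \cap \mathbb{Q}$ to the same identity on all of $[c, d]$. The resolution is to invoke continuity of $f$ only at the single point $aN_0$, which costs just one additive $\varepsilon$ when one swaps the arbitrary $a$ for a nearby rational $a'$, and requires no uniform control on $f$ near infinity.
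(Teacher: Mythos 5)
Your proof is correct, and while it shares the Baire-category backbone and the final dilation-of-an-interval covering step with the paper's argument, the route to the constancy of $F$ is genuinely different. The paper first proves $F\equiv L$ on all of $(0,\infty)$ before touching the limit at infinity: it notes $F(ra)=F(a)$ for rational $r$, invokes the fact that $F$, being a pointwise limit of the continuous functions $a\mapsto f(na)$, is of the first Baire class and hence has a dense set of continuity points, and combines the two to pin $F$ down everywhere; only then does it apply Baire category to the sets $\bigcap_{k\geq n}\{a\in[1,2]:|f(ka)-L|<\varepsilon\}$, whose very definition presupposes that the limit is $L$ on the whole interval. You avoid the first-Baire-class machinery entirely by running the category argument on the Cauchy-type sets $E_{N,\varepsilon}$, which make no reference to the limiting value, and you then import the value $L$ into the interval $[c,d]$ from the dense set of rationals (where $F=L$ follows from an elementary subsequence argument) at the cost of a single additive $\varepsilon$, using continuity of $f$ at the one point $aN_0$. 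This reverses the logical order -- you obtain $\lim_{x\to\infty}f(x)=L$ first and deduce $F\equiv L$ as a corollary -- and it is arguably more self-contained, since the only nontrivial input is the Baire category theorem itself rather than the theorem on points of continuity of Baire-one functions. A side benefit of your formulation with non-strict inequalities is that the sets $E_{N,\varepsilon}$ are manifestly closed, whereas the paper's sets as written use strict inequalities and are closed only after a harmless adjustment. Both arguments are sound.
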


\begin{proof}
Clearly the function $F$ is constant in each class of the quotient space
$\mathbb{R}/\mathbb{Q},$ $F\left(  ra\right)  =F\left(  a\right)  $ if
$r\in\mathbb{Q}.$ Also, $F$ is continuous or of the first Baire class
\cite{Gordon, Natanson}, so that the set $D$ of points of continuity of $F$ is
dense in $\left(  0,\infty\right)  .$ Let $\alpha\in D.$ Let $b>0.$ If
$\varepsilon>0$ then there exists $\delta>0$ such that $\left\vert
a-\alpha\right\vert <\delta$ implies $\left\vert F\left(  a\right)  -F\left(
\alpha\right)  \right\vert <\varepsilon$ and there exist $r$ rational such
that $\left\vert rb-\alpha\right\vert <\delta.$ Therefore
\begin{equation}
\left\vert F\left(  b\right)  -F\left(  \alpha\right)  \right\vert =\left\vert
F\left(  rb\right)  -F\left(  \alpha\right)  \right\vert <\varepsilon\,,
\label{DD 3}%
\end{equation}
and since $\varepsilon$ is arbitrary, $F\left(  b\right)  =F\left(
\alpha\right)  .$

In order to prove (\ref{DD 2}), observe that if $\varepsilon>0,$ then for each
$a\in\left[  1,2\right]  $ there exists $n_{0}=n_{0}\left(  a\right)  $ such
that $\left\vert f\left(  ka\right)  -L\right\vert <\varepsilon$ for $k\geq
n_{0}\left(  a\right)  .$ This means that%
\begin{equation}
\left[  1,2\right]  =\bigcup_{n=1}^{\infty}\bigcap_{k\geq n}\left\{
a\in\left[  1,2\right]  :\left\vert f\left(  ka\right)  -L\right\vert
<\varepsilon\right\}  \,. \label{DD 4}%
\end{equation}
Therefore there exists $n_{0}$ such that $\bigcap_{k\geq n_{0}}\left\{
a\in\left[  1,2\right]  :\left\vert f\left(  ka\right)  -L\right\vert
<\varepsilon\right\}  $ contains an interval $I=\left[  \alpha,\beta\right]  $
with $\alpha\neq\beta.$ Observe now that $\bigcup_{k=n_{0}}^{\infty}kI$
contains a ray $(B,\infty),$ since in fact $\bigcup_{k=n_{1}}^{\infty}kI$ is a
closed ray if $n_{1}>1/\left(  \beta-\alpha\right)  .$ Hence if $x>B$ then
$x=ka$ for some $k\geq n_{0}$ and some $a\in I$ and, consequently, $\left\vert
f\left(  x\right)  -L\right\vert =\left\vert f\left(  ka\right)  -L\right\vert
<\varepsilon.$\smallskip
\end{proof}

It is interesting that there are sequences $\left\{  \xi_{n}\right\}
_{n=1}^{\infty}$ with $\lim_{n\rightarrow\infty}\xi_{n}=\infty$ such that for
some continuous functions $f:\left(  0,\infty\right)  \rightarrow\mathbb{R}$
the limit%
\begin{equation}
\lim_{n\rightarrow\infty}f\left(  a\xi_{n}\right)  =G\left(  a\right)  
\label{DDn 1}%
\end{equation}
exists for all $a>0$, but the function $G$ is not constant. Indeed, let
$f\left(  x\right)  =\sin\left(  2\pi\ln x\right)$ and $\xi_{n}%
=e^{(n+1/n)}.$ The limit $\lim_{n\rightarrow\infty}f\left(  a\xi_{n}\right)
=\sin\left(  2\pi\ln a\right)  ,$ exists but it is not constant, of course.

\section{The measurable case\label{Section: The measurable case}}

We shall now consider an extension of the results of Section
\ref{Section: The continuous case} to measurable functions.\smallskip

\begin{proposition}
\label{Lemma Meas 1}Suppose $f:(0,\infty)\rightarrow\mathbb{R}$ is measurable.
For $a>0$, suppose that the function $F$ defined by%
\begin{equation}
F(a)=\lim\limits_{n\rightarrow\infty}f(an)\,, \label{meas 1}%
\end{equation}
is well-defined almost everywhere. Then $F$ is constant almost everywhere.
\end{proposition}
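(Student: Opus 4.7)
My plan is to recast the problem into one about translation invariance and then apply a standard smoothing argument. The hypothesis gives $F(ma)=F(a)$ almost everywhere for each positive integer $m$; after enlarging this scaling symmetry to all positive rationals and making the substitution $t=\ln a$, the problem reduces to showing that a measurable function on $\mathbb{R}$ which is translation-invariant (a.e.) under a countable dense set of translations must be constant a.e.

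For the scaling step, I observe that if $a$ and $ma$ both belong to the full-measure set $E$ where $F$ is defined, then the sequence $f((ma)n)$ plays two roles at once: it is the defining sequence converging to $F(ma)$, and it is a subsequence of $\{f(an)\}_{n\geq 1}$ converging to $F(a)$. Hence $F(ma)=F(a)$ almost everywhere, and substituting $a\mapsto a/m$ also yields $F(a/m)=F(a)$ almost everywhere. Combining these, for any positive rational $r=p/q$ we obtain $F(ra)=F(a)$ for almost every $a$. Setting $G(t)=F(e^t)$, the relation becomes $G(t+c)=G(t)$ a.e.\ $t$ for every $c$ in the countable dense set $D=\{\ln r:r\in\mathbb{Q}_{>0}\}\subset\mathbb{R}$.

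To finish, I would replace $G$ with $\arctan\circ G$, reducing to $G\in L^\infty_{\mathrm{loc}}(\mathbb{R})$ (constancy of the composition forces constancy of $G$). For any $\phi\in C_c^\infty(\mathbb{R})$ the convolution $G*\phi$ is smooth, and the a.e.\ invariance of $G$ under $D$ transfers, at every fixed $t$, to the pointwise equality $(G*\phi)(t+c)=(G*\phi)(t)$ for each $c\in D$; continuity of $G*\phi$ combined with density of $D$ in $\mathbb{R}$ then forces $G*\phi$ to be a constant. Choosing $\phi=\phi_{\varepsilon}$ an approximate identity and invoking Lebesgue differentiation gives $G(t)=\lim_{\varepsilon\to 0^{+}}(G*\phi_{\varepsilon})(t)$ a.e., so $G$, and therefore $F$, is constant almost everywhere.

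The main obstacle is the transition from the arithmetic hypothesis to genuine translation invariance: in the continuous case of Proposition~\ref{Prop. DD 1} one used Baire category and a direct covering argument in $[1,2]$, but with only a.e.\ information such pointwise covering arguments are unavailable. Convolving with a mollifier is the natural substitute, since it upgrades an a.e.\ translation symmetry to a pointwise continuous one, at which point density of $D$ can be exploited just as in the continuous case.
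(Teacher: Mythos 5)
Your argument is correct, but it takes a genuinely different route from the paper's. The paper does not extract the dilation symmetry $F(ra)=F(a)$ at all; instead it fixes a test function $\phi\in\mathcal{D}(0,\infty)$, forms the continuous function $G(\lambda)=\int_{0}^{\infty}f(\lambda x)\phi(x)\,\mathrm{d}x$, uses dominated convergence to see that $\lim_{n}G(\lambda n)$ exists for every $\lambda$, applies the continuous-case Proposition \ref{Prop. DD 1} to conclude that $\int_{0}^{\infty}F(\lambda x)\phi(x)\,\mathrm{d}x$ is independent of $\lambda$, and then invokes the classification of homogeneous distributions of degree $0$ on $(0,\infty)$ to get $F=C$ as a distribution, hence almost everywhere. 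You instead observe that $\{f(man)\}_{n}$ is simultaneously the defining sequence for $F(ma)$ and a subsequence of the one for $F(a)$, upgrade this to $F(ra)=F(a)$ a.e.\ for all $r\in\mathbb{Q}_{>0}$, pass to $G(t)=F(e^{t})$, and mollify so that the a.e.\ invariance under the dense translation group becomes exact invariance of the continuous function $G*\phi$, which is therefore constant; Lebesgue differentiation then recovers $F$. (All the individual steps check out: the sets $\{a: ra\in E\}$ have full measure, the a.e.\ identity $G(u+c)=G(u)$ does integrate against $\phi(t-u)$ to give the pointwise identity for $G*\phi$, and the $\arctan$ reduction to the bounded case is the same device the paper uses.) Your route is more self-contained: it bypasses both the Baire-category covering argument inside Proposition \ref{Prop. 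DD 1} and the homogeneous-distribution classification, since density of the translations acting on a genuinely continuous function already forces constancy. What the paper's route buys is economy within the article -- it reuses Proposition \ref{Prop. DD 1}, which is needed anyway and which also drives the convergence-in-measure result of Proposition \ref{Prop. DD 2} -- whereas your mollification argument, read in isolation, is arguably the cleaner proof of this particular statement.
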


\begin{proof}
Let us first suppose that $f\in L^{\infty}(0,\infty).$ Let $\phi\in
\mathcal{D}(0,\infty)$ be a test function. For $\lambda>0,$ let us define%
\begin{equation}
G(\lambda)=\int_{0}^{\infty}f(\lambda x)\phi(x)\,\mathrm{d}x\,. \label{meas 2}%
\end{equation}

The function $G$ is continuous because $\phi\in\mathcal{D}(0,\infty)$. For a
fixed $\lambda,$ let us consider the sequence $\left\{  G(\lambda n)\right\}
_{n=1}^{\infty}.$ Since $f$ is bounded, we can apply the dominated convergence
theorem to see that
\begin{equation}
\lim\limits_{n\rightarrow\infty}G(\lambda n)=\int_{0}^{\infty}\lim
\limits_{n\rightarrow\infty}f(\lambda nx)\phi(x)\,\mathrm{d}x=\int_{0}%
^{\infty}F(\lambda x)\phi(x)\,\mathrm{d}x\,,\nonumber
\end{equation}
exists. The Proposition \ref{Prop. DD 1}\ then yields that $\int_{0}^{\infty
}F(\lambda x)\phi(x)\,\mathrm{d}x$ does not depend on $\lambda,$%
\begin{equation}
\int_{0}^{\infty}F(\lambda x)\phi(x)\,\mathrm{d}x=\int_{0}^{\infty}%
F(x)\phi(x)\,\mathrm{d}x\,. \label{meas 4}%
\end{equation}
Therefore, the regular distribution $F$ is constant since $F(\lambda x)=F(x),$
$\lambda>0,$ and only the constants are homogeneous of degree $0$ in the
interval $\left(  0,\infty\right)  $ \cite{EK2002}, that is, $F(x)=C,$ as
\emph{distributions.} Notice now that the locally integrable function that
gives a regular distribution is unique almost everywhere, so that
$F(x)=C\ \ \ \left(  \text{a.e.}\right)  .$

Let us now consider the case of a general measurable function $f$ for which
the limit $\lim\limits_{n\rightarrow\infty}f(an)=F(a)$ exists (a.e.). We can
then define the bounded function%
\begin{equation}
h(x)=\arctan f{(x)}\,. \label{meas 7}%
\end{equation}
Then $\lim\limits_{n\rightarrow\infty}h{(}an{)}=\arctan F(a)$ exists (a.e.).
Consequently, $\arctan F(a)$ is constant, and therefore so is $F(a).$%
\smallskip
\end{proof}

In Proposition \ref{Prop. DD 1}, it is shown that in the continuous case not
only is $F\left(  a\right)  =L$ for all $a>0,$ but actually $\lim
_{x\rightarrow\infty}f\left(  x\right)  =L.$ This is no longer true in the
measurable case; for example, if $f=\chi_{B},$ the characteristic function of
a set $B$ of measure zero such that $B\cap\left(  x,\infty\right)
\neq\emptyset$ for all $x>0,$ then $\lim_{x\rightarrow\infty}f\left(
x\right)  $ does not exist. Of course, this function $f$ is equal almost
everywhere to a function $\widetilde{f},$ the zero function, for which
$\lim_{x\rightarrow\infty}\widetilde{f}\left(  x\right)  $ exists. An example
where (\ref{meas 1}) exists for all $a>0$ but $\lim_{x\rightarrow\infty
}\widetilde{f}\left(  x\right)  $ does not exist for any function $f$ such
that $f\left(  x\right)  =\widetilde{f}\left(  x\right)  $ (a.e.) can be
constructed as follows. The strategy will be to construct an unbounded set, $A$, with measure $0$ such that $f\left(x\right) \neq 0$ for infinitely many $x\in A$ \smallskip

\begin{example}
\label{Example 1}Let $\left\{  N_{k}\right\}  _{k=1}^{\infty}$ be a sequence
of positive integers such that%
\begin{equation}
kN_{k}<N_{k+1}\,. \label{ex 1}%
\end{equation}
For each $k$ let us choose a \ non empty open interval $B_{k}\subset\left(
N_{k}-1,N_{k}\right)  .$ Then if $j\in\mathbb{N},$ $j\left(  \frac{1}%
{k},1\right)  \cap B_{k}\neq\emptyset$ only if $N_{k}\leq j<N_{k+1}.$
Therefore, if $x\in\left(  \frac{1}{k},1\right)  ,$ then%
\begin{equation}
\chi_{B_{k}}\left(  jx\right)  =0\,\ \text{for all \ }j\in\mathbb{N}%
\,,\ \ x\notin A_{k}\,, \label{ex 3}%
\end{equation}
where $A_{k}=\bigcup_{j=N_{k}}^{N_{k+1}-1}\frac{1}{j}B_{k}\,.$ Let now
$\left\{  \eta_{k}\right\}  _{k=1}^{\infty}$ be a sequence of strictly
positive numbers such that the series $\sum_{k=1}^{\infty}\eta_{k}$ converges
and let us further restrict the sets $B_{k}$ by requiring that $\mu\left(
A_{k}\right)  <\eta_{k},$ for all $k.$ Let $A=\limsup_{k\rightarrow\infty
}A_{k}=\bigcap_{k=1}^{\infty}\bigcup_{q=k}^{\infty}A_{q}.$ Then $\mu\left(
A\right)  =0.$

Let us now define the function $f:(0,\infty)\rightarrow\mathbb{R}$ by%
\begin{equation}
f\left(  x\right)  =\sum_{k=1}^{\infty}\chi_{B_{k}}\left(  x\right)  \,.
\label{ex 6}%
\end{equation}
If $\widetilde{f}\left(  x\right)  =f\left(  x\right)  $ almost everywhere,
then the limit $\lim_{x\rightarrow\infty}\widetilde{f}\left(  x\right)  $ does
not exist. On the other hand, $\lim_{n\rightarrow\infty}f\left(  nx\right)  $
exists almost everywhere. Indeed, it is enough to show the existence almost
everywhere in $\left(  0,1\right)  ,$ and\ the limit of $f\left(  nx\right)  $
exists\ and equals $0$ if $x\in\left(  0,1\right)  \setminus A$ since if
$x\notin A$ then there exists $k_{0}$ such that $x\notin A_{k}$ for $k\geq
k_{0}$ and consequently, $f\left(  nx\right)  =0$ whenever $n\geq N_{k_{0}}%
.$\smallskip
\end{example}

An example involving continuous functions can be obtained by a slight
modification.\smallskip

\begin{example}
\label{Example 2}Let $g$ be a continuous function in $\left(  0,\infty\right)
$ such that $0\leq g\left(  x\right)  \leq f\left(  x\right)  $ and such that
there exist points $\xi_{k}\in B_{k},$ for all $k,$ such that $g\left(
\xi_{k}\right)  =1.$ Then $\lim_{n\rightarrow\infty}g\left(  nx\right)  =0$
almost everywhere, \emph{but not everywhere}, since $\lim_{x\rightarrow\infty
}g\left(  x\right)  $ does not exist.\smallskip
\end{example}

The examples show that it is possible for $\lim\limits_{n\rightarrow\infty
}f(an)$ to be equal to a constant $L$ almost everywhere but without
$\lim_{x\rightarrow\infty}f\left(  x\right)  $ existing. We do have a
convergence in measure type result.\smallskip

\begin{proposition}
\label{Prop. DD 2}Suppose $f:(0,\infty)\rightarrow\mathbb{R}$ is measurable.
Suppose that%
\begin{equation}
\lim\limits_{n\rightarrow\infty}f(an)=L\,\ \ \ \left(  \text{a.e.}\right)  \,.
\label{meas 9}%
\end{equation}
Then for all $\varepsilon>0$ and all $C>1,$%
\begin{equation}
\lim_{x\rightarrow\infty}\frac{\mu\left(  \left\{  t\in\left[  x,Cx\right]
:\left\vert f\left(  t\right)  -L\right\vert >\varepsilon\right\}  \right)
}{\mu\left(  \left[  x,Cx\right]  \right)  }=0\,, \label{meas 10}%
\end{equation}
where $\mu$ denotes the Lebesgue measure of a set.
\end{proposition}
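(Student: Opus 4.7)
The plan is to deduce the density statement from the hypothesis by (i) converting the a.e.\ sequential convergence $f(an)\to L$ into an integrability statement about the indicator $\chi_{E_{\varepsilon}}(an)$, (ii) using dominated convergence and a change of variables to get the conclusion for integer $n$, and (iii) comparing $[x,Cx]$ with a slightly larger integer-indexed interval $[n,C'n]$ for continuous $x$. After replacing $f$ by $f-L$, we may assume $L=0$, and write $E_{\varepsilon}=\{t\in(0,\infty):|f(t)|>\varepsilon\}$.

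For a.e.\ $a>0$, the convergence $f(an)\to 0$ forces $|f(an)|\le\varepsilon$ for all sufficiently large $n$; since $\chi_{E_{\varepsilon}}$ is $0$--$1$-valued, this says $\chi_{E_{\varepsilon}}(an)\to 0$ for a.e.\ $a$. Fix any $C'>C$; restricting $a$ to $[1,C']$ and bounding $\chi_{E_{\varepsilon}}(an)\le 1$, the Dominated Convergence Theorem yields
\begin{equation*}
\int_{1}^{C'}\chi_{E_{\varepsilon}}(an)\,\mathrm{d}a\longrightarrow 0\quad\text{as }n\to\infty,\ n\in\mathbb{N}.
\end{equation*}
The substitution $t=an$ rewrites the left-hand side as $\mu(E_{\varepsilon}\cap[n,C'n])/n$, so this density along integer scales tends to $0$.

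To reach the continuous statement, given large $x$ set $n=\lfloor x\rfloor$; then $n\le x<n+1$, and as soon as $n\ge C/(C'-C)$ one has $Cx<Cn+C\le C'n$, so $[x,Cx]\subset[n,C'n]$. Combined with $x\ge n$ this gives
\begin{equation*}
\frac{\mu(E_{\varepsilon}\cap[x,Cx])}{(C-1)x}\;\le\;\frac{\mu(E_{\varepsilon}\cap[n,C'n])}{(C-1)n}\;=\;\frac{C'-1}{C-1}\cdot\frac{\mu(E_{\varepsilon}\cap[n,C'n])}{(C'-1)n}\longrightarrow 0,
\end{equation*}
proving \eqref{meas 10}. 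The main conceptual step is the dominated-convergence computation: once the sequential a.e.\ hypothesis is integrated against $a$ on a bounded interval, the change of variables immediately yields the density statement along integers, and the integer-to-real passage is then a routine comparison. I do not expect any serious obstacle beyond being careful to choose $C'>C$ strictly so that the slightly enlarged interval $[n,C'n]$ eventually contains $[x,Cx]$.
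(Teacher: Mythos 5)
Your proof is correct, but the way you pass from integer to continuous scales is genuinely different from the paper's. The first half is essentially the same idea in different clothing: the paper fixes $a>0$, observes that $f_{n}(s)=f(ns)$ converges to $L$ a.e.\ on the finite interval $[a,Ca]$ and hence in measure, and after the substitution $t=ns$ this says that the continuous function
\begin{equation*}
G(x)=\frac{\mu\left(\left\{t\in[x,Cx]:\left\vert f(t)-L\right\vert>\varepsilon\right\}\right)}{(C-1)x}
\end{equation*}
satisfies $G(na)\to 0$ for \emph{every} $a>0$; your dominated-convergence computation of $\int_{1}^{C'}\chi_{E_{\varepsilon}}(an)\,\mathrm{d}a$ is the same statement for the single value $a=1$ with the enlarged ratio $C'$. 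The divergence is in the last step: the paper feeds $G$ and the full family of sequences $\{na\}_{n}$, $a>0$, into Proposition \ref{Prop. DD 1}, whose proof rests on Baire-class-one and category arguments, whereas you only need decay along the integers and recover the continuous limit by the elementary sandwich $[x,Cx]\subset[\lfloor x\rfloor,\,C'\lfloor x\rfloor]$, valid once $\lfloor x\rfloor\geq C/(C'-C)$, together with the comparison of the normalizing lengths $(C-1)x\geq(C-1)\lfloor x\rfloor$. Your route is more self-contained and elementary --- it makes this proposition independent of Proposition \ref{Prop. DD 1} --- at the cost of introducing the auxiliary parameter $C'>C$; the paper's route is shorter on the page because Proposition \ref{Prop. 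DD 1} is already established and it keeps explicit the theme of deducing continuous limits from sequential ones. Both arguments are sound.
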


\begin{proof}
Let us denote by $G\left(  x\right)  $ the quotient $\mu\left(  \left\{
t\in\left[  x,cx\right]  :\left\vert f\left(  t\right)  -L\right\vert
>\varepsilon\right\}  \right)  /\left(  1-C\right)  x.$ Notice that $G$ is a
continuous function in $\left(  0,\infty\right)  .$ Let $a>0$ be fixed and
consider the sequence of functions $f_{n}\left(  x\right)  =f\left(
nx\right)  $ in the interval $\left[  a,Ca\right]  .$ Since $f_{n}$ converges
to $L$ almost everywhere in this finite interval, it converges to $L$ in
measure. This means that for all $\varepsilon>0$ the measure of the set
$\{s\in\left[  a,Ca\right]  :\left\vert f_{n}\left(  s\right)  -L\right\vert
>\varepsilon\}$ tends to zero. But the transformation $t=ns$ gives%
\begin{align*}
\frac{\mu\left(  \{s\in\left[  a,Ca\right]  :\left\vert f_{n}\left(  s\right)
-L\right\vert >\varepsilon\}\right)  }{\mu\left(  \left[  a,Ca\right]
\right)  } &  =\frac{\mu\left(  \{s\in\left[  a,Ca\right]  :\left\vert
f\left(  ns\right)  -L\right\vert >\varepsilon\}\right)  }{\mu\left(  \left[
a,Ca\right]  \right)  }\\
&  =\frac{\mu\left(  \left\{  t\in\left[  na,Cna\right]  :\left\vert f\left(
t\right)  -L\right\vert >\varepsilon\right\}  \right)  }{\mu\left(  \left[
na,Cna\right]  \right)  }\\
&  =G\left(  na\right)  \,,
\end{align*}
so that $\lim_{n\rightarrow\infty}G\left(  na\right)  =0.$ Proposition
\ref{Prop. DD 1} then yields (\ref{meas 10}).
\end{proof}

\section{Delta sequences\label{Section: Deltasequences}}

A sequence $\left\{  f_{n}\right\}  _{n=1}^{\infty}$ of distributions is
called a delta sequence if $f_{n}\left(  \mathbf{x}\right)  \rightarrow
\delta\left(  \mathbf{x}\right)  $ in either the strong \ or the weak topology
of $\mathcal{D}^{\prime}\left(  \mathbb{R}^{d}\right)  ,$ since the two
notions are equivalent \cite{Treves}. In other words, $\left\{  f_{n}\right\}
_{n=1}^{\infty}$ is a delta sequence if%
\begin{equation}
\lim_{n\rightarrow\infty}\left\langle f,\phi\right\rangle =\phi\left(
\mathbf{0}\right)  \,, \label{del 1}%
\end{equation}
for all $\phi\in\mathcal{D}\left(  \mathbb{R}^{d}\right)  .$ In this article
we will be interested mostly in the case when the distributions $f_{n}$ are
actually smooth functions, but general delta sequences are also of interest,
of course. They have been employed in several problems \cite{AMS}, such as the
definitions of point values \cite{Sasane}\ or the definition of products of
distributions \cite{KohLi, Li, Ozcag}.

There are many ways to construct delta sequences. A simple one is the
following. Let $f$ be a fixed distribution of rapid decay at infinity, that
is, $f\in\mathcal{K}^{\prime}\left(  \mathbb{R}^{d}\right)  .$ Then all the
moments $\mu_{\mathbf{k}}=\left\langle f\left(  \mathbf{x}\right)
,\mathbf{x}^{\mathbf{k}}\right\rangle ,$ exist for $\mathbf{k}\in
\mathbb{N}^{d}$ since all polynomials belong to $\mathcal{K}\left(
\mathbb{R}^{d}\right)  ,$ and the \emph{moment asymptotic expansion }%
\begin{equation}
f\left(  \lambda\mathbf{x}\right)  \sim\sum_{q=0}^{\infty}\sum_{\left\vert
\mathbf{k}\right\vert =q}\frac{\mu_{\mathbf{k}}\mathbf{D}^{\mathbf{k}}%
\delta\left(  \mathbf{x}\right)  }{\mathbf{k}!}\frac{1}{\lambda^{q+d}%
}\,,\ \ \text{\ as }\lambda\rightarrow\infty\,, \label{del 3}%
\end{equation}
holds in $\mathcal{K}^{\prime}\left(  \mathbb{R}^{d}\right)  $ \cite{EK2002}.
Therefore, when $\mu_{\mathbf{0}}\neq0$\ if $\left\{  \xi_{n}\right\}
_{n=1}^{\infty}$ is any sequence of positive numbers with $\lim_{n\rightarrow
\infty}\xi_{n}=\infty$ then
\begin{equation}
g_{n}\left(  \mathbf{x}\right)  =\frac{\xi_{n}^{d}}{\mu_{\mathbf{0}}}f\left(
\xi_{n}\mathbf{x}\right)  \,, \label{del 4}%
\end{equation}
is a delta sequence, generated by $f$ and $\left\{  \xi_{n}\right\}
_{n=1}^{\infty}.$ When $\xi_{n}=n$ for all $n,$ we call this sequence the
\emph{standard} delta sequence generated by $f.$

Another useful construction of delta sequences is provided by the ensuing well
known result.\smallskip

\begin{lemma}
\label{Lemma DS 1}Suppose $\left\{  \psi_{n}\right\}  _{n=1}^{\infty}$ is a
sequence of normalized positive test functions in $\mathcal{D}^{\prime}\left(
\mathbb{R}^{d}\right)  $ such that $\operatorname*{supp}\psi_{n}%
\subset\{\mathbf{x:}\left\vert \mathbf{x}\right\vert <r_{n}\},$ where
$\lim_{n\rightarrow\infty}r_{n}=0.$ Then $\left\{  \psi_{n}\right\}
_{n=1}^{\infty}$ is a delta sequence.
\end{lemma}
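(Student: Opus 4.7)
The plan is to prove this by a direct approximate-identity argument: for a fixed $\phi\in\mathcal{D}(\mathbb{R}^d)$, show that $\langle\psi_n,\phi\rangle\to\phi(\mathbf{0})$ using only the normalization $\int\psi_n=1$, positivity $\psi_n\geq 0$, the shrinking-support condition, and continuity of $\phi$.

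First, I would write
\begin{equation*}
\langle\psi_n,\phi\rangle-\phi(\mathbf{0})=\int_{\mathbb{R}^d}\psi_n(\mathbf{x})\bigl(\phi(\mathbf{x})-\phi(\mathbf{0})\bigr)\,\mathrm{d}\mathbf{x},
\end{equation*}
using the fact that $\phi(\mathbf{0})=\phi(\mathbf{0})\int\psi_n\,\mathrm{d}\mathbf{x}$. Next, given $\varepsilon>0$, use continuity of $\phi$ at the origin to pick $\delta>0$ with $|\phi(\mathbf{x})-\phi(\mathbf{0})|<\varepsilon$ whenever $|\mathbf{x}|<\delta$. Because $r_n\to 0$, there exists $N$ such that $r_n<\delta$ for all $n\geq N$, and then $\operatorname{supp}\psi_n\subset\{|\mathbf{x}|<\delta\}$.

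Then I would combine these: for $n\geq N$, the integrand is supported in $\{|\mathbf{x}|<\delta\}$, so by positivity of $\psi_n$,
\begin{equation*}
|\langle\psi_n,\phi\rangle-\phi(\mathbf{0})|\leq\int_{|\mathbf{x}|<r_n}\psi_n(\mathbf{x})|\phi(\mathbf{x})-\phi(\mathbf{0})|\,\mathrm{d}\mathbf{x}\leq\varepsilon\int\psi_n\,\mathrm{d}\mathbf{x}=\varepsilon.
\end{equation*}
Since $\varepsilon$ was arbitrary, this shows $\langle\psi_n,\phi\rangle\to\phi(\mathbf{0})$ for every $\phi\in\mathcal{D}(\mathbb{R}^d)$, which by definition means $\psi_n\to\delta$ weakly in $\mathcal{D}'(\mathbb{R}^d)$; as noted in Section \ref{Section: Deltasequences}, weak and strong convergence coincide, so $\{\psi_n\}$ is indeed a delta sequence.

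There is no real obstacle here: both positivity and the $L^1$-normalization are essential (they turn $\psi_n$ into a probability measure, which allows the crude bound by $\varepsilon$), and the shrinking-support hypothesis replaces the more delicate tail estimate one would otherwise need. The only mild subtlety is remembering to invoke the equivalence of weak and strong sequential convergence in $\mathcal{D}'(\mathbb{R}^d)$ to upgrade the tested-against-every-$\phi$ conclusion to the statement that $\{\psi_n\}$ is a delta sequence in the sense defined in \eqref{del 1}.
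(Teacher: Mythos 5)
Your proof is correct and complete. The paper proves the same lemma by a slightly different packaging of the same idea: instead of the explicit $\varepsilon$--$\delta$ estimate, it invokes the first mean value theorem for integrals to write $\left\langle \psi_{n},\phi\right\rangle =\phi\left(  \mathbf{x}_{n}\right)$ exactly, for some $\mathbf{x}_{n}$ with $\left\vert \mathbf{x}_{n}\right\vert \leq r_{n}$, and then concludes from $\mathbf{x}_{n}\rightarrow\mathbf{0}$ and continuity of $\phi$. Both arguments use precisely the same three hypotheses (positivity, normalization, shrinking support) in the same roles; the mean value theorem route is marginally shorter because it hides the estimate inside a named theorem, while your direct computation is more self-contained and is the form that generalizes immediately to approximate identities without shrinking support (where one must split off a small tail, as you note). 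Your closing remark about upgrading weak convergence to the delta-sequence statement via the equivalence of weak and strong sequential convergence in $\mathcal{D}^{\prime}\left(\mathbb{R}^{d}\right)$ matches what the paper takes for granted in its definition (\ref{del 1}).
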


\begin{proof}
Let $\phi$ be any test function. Then by the first mean value theorem for
integrals,%
\begin{equation}
\left\langle \psi_{n},\phi\right\rangle =\int_{\operatorname*{supp}\psi_{n}%
}\psi_{n}\left(  \mathbf{x}\right)  \phi\left(  \mathbf{x}\right)
\,\mathrm{d}\mathbf{x}=\phi\left(  \mathbf{x}_{n}\right)  \,, \label{DS 1}%
\end{equation}
for some $\mathbf{x}_{n}\in\operatorname*{supp}\psi_{n}.$ Since $\left\vert
\mathbf{x}_{n}\right\vert \leq r_{n}\rightarrow0,$ we obtain that
$\mathbf{x}_{n}\rightarrow0,$ and consequently, $\phi\left(  \mathbf{x}%
_{n}\right)  \rightarrow\phi\left(  \mathbf{0}\right)  .$ Thus $\psi
_{n}\left(  \mathbf{x}\right)  \rightarrow\delta\left(  \mathbf{x}\right)
.$\smallskip
\end{proof}

We now give a notion of point value of a distribution based on delta
sequences. Our definition applies to several spaces of distributions, but the
cases $\mathcal{A}=\mathcal{D},$ $\mathcal{E},$ or $\mathcal{S}$ seem the most
relevant.\smallskip

\begin{definition}
Let $\mathcal{A}\left(  \mathbb{R}^{d}\right)  $ be a space of test functions.
Let $\mathfrak{F}$ be a family of delta sequences whose elements belong to
$\mathcal{A}\left(  \mathbb{R}^{d}\right)  .$\ If $f\in\mathcal{A}^{\prime
}\left(  \mathbb{R}^{d}\right)  $ and $\mathbf{x}_{0}\in\mathbb{R}^{d}$ we say
that the value $f\left(  \mathbf{x}_{0}\right)  $ exists and equals $\gamma$
with respect to $\mathfrak{F}$ if
\begin{equation}
\lim_{n\rightarrow\infty}\left\langle f\left(  \mathbf{x}_{0}+\mathbf{x}%
\right)  ,\phi_{n}\right\rangle =\gamma\,, \label{del 5}%
\end{equation}
for all $\left\{  \phi_{n}\right\}  _{n=1}^{\infty}\in\mathfrak{F}.$ When this
holds we write%
\begin{equation}
f\left(  \mathbf{x}_{0}\right)  =\gamma\ \ \ \left(  \mathfrak{F}\right)  \,.
\label{del 6}%
\end{equation}

\end{definition}

The definition of point value employed by Sasane \cite{Sasane} corresponds to
the case when $d=1,$ $f\in\mathcal{D}^{\prime}\left(  \mathbb{R}\right)  ,$
and $\mathfrak{F}$ is the family of all delta sequences whose elements are the
standard sequences generated from a positive, normalized, and symmetric test
function of $\mathcal{D}\left(  \mathbb{R}\right)  .$

\section{Several lemmas\label{Section: Several lemmas}}

In this section, we present several results on how positive test functions
allow us to study many properties of distributions. In particular, we see how
positive test functions tell us if a distribution is a regular distribution
given by a bounded measurable function and give us the essential supremum and
infimum of such a function. In this section, and only in this section, we will
make a notational difference between a regular distribution $\mathsf{f}%
\in\mathcal{D}^{\prime}\left(  \mathbb{R}^{d}\right)  $ and the locally
integrable function $f\ $that generates it as%
\begin{equation}
\left\langle \mathsf{f}\left(  \mathbf{x}\right)  ,\phi\left(  \mathbf{x}%
\right)  \right\rangle =\int_{\mathbb{R}^{d}}f\left(  \mathbf{x}\right)
\phi\left(  \mathbf{x}\right)  \,\mathrm{d}\mathbf{x}\,,\ \ \ \ \phi
\in\mathcal{D}\left(  \mathbb{R}^{d}\right)  \,. \label{SLe 0}%
\end{equation}
In the rest of the article, we will use the same notation, $f,$ for the
distribution and the function.

Let us start with following simple result.\smallskip

\begin{lemma}
\label{Lemma DD 1}The set of functions of the form%
\begin{equation}
\phi=c_{1}\psi_{1}-c_{2}\psi_{2}\,, \label{DD 5}%
\end{equation}
where $c_{1}$ and $c_{2}$ are constants and where $\psi_{1}$ and $\psi_{2}$
are normalized positive test functions is the whole space $\mathcal{D}\left(
\mathbb{R}^{d}\right)  .$

When $d=1,$ the corresponding space with $\psi_{1}$ and $\psi_{2}$ normalized
positive symmetric test functions is the space of all even test functions.
\end{lemma}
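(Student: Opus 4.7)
The plan is to prove both parts by the same explicit decomposition, using a fixed positive bump as a reservoir of positivity.

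First I would fix any normalized positive test function $\rho \in \mathcal{D}(\mathbb{R}^d)$ whose support is a ball strictly containing $\operatorname{supp}\phi$, and which is strictly positive on an open set containing $\operatorname{supp}\phi$. Since $\operatorname{supp}\phi$ is compact and $\rho$ is continuous and strictly positive on it, there is an $\varepsilon>0$ with $\rho \geq \varepsilon$ on $\operatorname{supp}\phi$. Choosing $M \geq \|\phi\|_\infty/\varepsilon$, the function $\psi := M\rho + \phi$ then satisfies $\psi \geq 0$ everywhere: on $\operatorname{supp}\phi$ we have $M\rho \geq \|\phi\|_\infty \geq -\phi$, and off $\operatorname{supp}\phi$ we have $\phi = 0$ while $M\rho \geq 0$. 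Clearly $\psi \in \mathcal{D}(\mathbb{R}^d)$.

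Next I would set $c_2 = M$, $\psi_2 = \rho$, and $c_1 = \int_{\mathbb{R}^d} \psi\, \mathrm{d}\mathbf{x} = M + \int_{\mathbb{R}^d}\phi\, \mathrm{d}\mathbf{x}$. Taking $M$ large enough one also guarantees $c_1 > 0$, so $\psi_1 := \psi / c_1$ is a well-defined normalized positive test function. Then
\begin{equation}
c_1 \psi_1 - c_2 \psi_2 \;=\; \psi - M\rho \;=\; \phi,
\end{equation}
giving the claimed decomposition. The degenerate case $\phi \equiv 0$ is handled trivially by $c_1 = c_2$, $\psi_1 = \psi_2 = \rho$.

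For the second statement, restrict to $d=1$ and assume $\phi$ is even. The same construction works provided the auxiliary bump $\rho$ is chosen even, which is easy: take any normalized positive bump supported on $[-R,R]$ with $R$ large enough that $\operatorname{supp}\phi \subset (-R,R)$, and symmetrize it, i.e.\ replace it by $\tfrac{1}{2}(\rho(x)+\rho(-x))$ and renormalize. Then $\psi = M\rho + \phi$ is even (sum of two even functions), hence $\psi_1$ and $\psi_2$ are both normalized positive symmetric test functions. Conversely, any linear combination $c_1\psi_1 - c_2\psi_2$ of even functions is even, so the produced space is exactly the even test functions.

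The only mildly delicate point is making sure $\rho$ is strictly positive on all of $\operatorname{supp}\phi$ so that a single constant $M$ dominates $-\phi$; the rest is bookkeeping. No deep obstacle.
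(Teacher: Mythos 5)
Your proof is correct and is essentially the paper's argument: both write $\phi$ as a difference of two nonnegative test functions by dominating it with a large positive bump (your $\zeta_2=M\rho$, $\zeta_1=M\rho+\phi$ versus the paper's $\zeta_1\geq\max\{\phi,0\}$, $\zeta_2=\zeta_1-\phi$) and then normalize, choosing the bump even for the symmetric case.
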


\begin{proof}
It is enough to show that the real valued elements of $\mathcal{D}\left(
\mathbb{R}^{d}\right)  $ have the form (\ref{DD 5}) for some positive
constants $c_{1}$ and $c_{2}.$ Let $\zeta_{1}\in\mathcal{D}\left(
\mathbb{R}^{d}\right)  $ be such that $\zeta_{1}\left(  \mathbf{x}\right)
\geq\max\left\{  \phi\left(  \mathbf{x}\right)  ,0\right\}  $ and let
$\zeta_{2}=\zeta_{1}-\phi.$ Then we write $\zeta_{j}=c_{j}\psi_{j}$ where the
$\psi_{j}$ are normalized positive test functions and $c_{j}=\int
_{\mathbb{R}^{d}}\zeta_{j}\left(  \mathbf{x}\right)  \,\mathrm{d}\mathbf{x}.$
In the symmetric case we just also ask $\zeta_{1}$ to be even.\smallskip
\end{proof}

Our first characterization using positive normalized test functions is the
following.\smallskip

\begin{lemma}
\label{SL 1}Let $\mathsf{f}\in\mathcal{D}^{\prime}\left(  \mathbb{R}%
^{d}\right)  .$ Then $\mathsf{f}$ is a regular distribution in an open set
$U\subset\mathbb{R}^{d},$ given by a bounded function $f\in L^{\infty}\left(
U\right)  $ if and only if there exists a constant $M>0$ such that for all
positive, normalized test functions $\phi\in\mathcal{D}\left(  U\right)  $ we
have%
\begin{equation}
\left\vert \left\langle \mathsf{f}\left(  \mathbf{x}\right)  ,\phi\left(
\mathbf{x}\right)  \right\rangle \right\vert \leq M\,. \label{SLe 1}%
\end{equation}

\end{lemma}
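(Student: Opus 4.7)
The plan is to prove the equivalence via the $L^{1}$--$L^{\infty}$ duality. The forward direction is immediate: if $\mathsf{f}$ is given by $f\in L^{\infty}(U)$ and $\phi$ is a positive normalized test function in $\mathcal{D}(U)$, then
\[
\bigl|\langle \mathsf{f}, \phi\rangle\bigr| = \Bigl|\int_{U} f\phi\,\mathrm{d}\mathbf{x}\Bigr| \leq \|f\|_{L^{\infty}(U)}\int_{U}\phi\,\mathrm{d}\mathbf{x} = \|f\|_{L^{\infty}(U)},
\]
so one may take $M=\|f\|_{L^{\infty}(U)}$.

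For the converse, the goal is to upgrade the hypothesis to the bound $|\langle \mathsf{f}, \phi\rangle|\leq M\|\phi\|_{L^{1}(U)}$ for every $\phi\in\mathcal{D}(U)$, and then invoke duality. A scaling argument first handles the nonnegative case: for $\phi\in\mathcal{D}(U)$ nonnegative with $\int\phi>0$, the function $\phi/\int\phi$ is positive and normalized, so the hypothesis gives $|\langle \mathsf{f}, \phi\rangle|\leq M\int\phi = M\|\phi\|_{L^{1}}$; the case $\phi\equiv 0$ is trivial.

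Next, I would extend the bound to real-valued $\phi\in\mathcal{D}(U)$ by mollification. The positive and negative parts $\phi_{\pm}$ are continuous with compact support but are not smooth, so one cannot apply the nonnegative case to them directly. Instead, let $\rho_{\epsilon}$ be a standard mollifier supported in $\{|\mathbf{x}|<\epsilon\}$; for $\epsilon$ smaller than the distance from $\operatorname{supp}\phi$ to $\partial U$, the convolutions $\phi_{\pm}\ast\rho_{\epsilon}$ lie in $\mathcal{D}(U)$ and are nonnegative, and Young's inequality gives $\|\phi_{\pm}\ast\rho_{\epsilon}\|_{L^{1}}\leq\|\phi_{\pm}\|_{L^{1}}$. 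Writing $\phi\ast\rho_{\epsilon} = \phi_{+}\ast\rho_{\epsilon} - \phi_{-}\ast\rho_{\epsilon}$, the nonnegative bound yields
\[
\bigl|\langle \mathsf{f}, \phi\ast\rho_{\epsilon}\rangle\bigr| \leq M\bigl(\|\phi_{+}\|_{L^{1}} + \|\phi_{-}\|_{L^{1}}\bigr) = M\|\phi\|_{L^{1}}.
\]
Since $\phi\ast\rho_{\epsilon}\to\phi$ in $\mathcal{D}(U)$ as $\epsilon\to 0^{+}$, continuity of $\mathsf{f}$ gives $|\langle \mathsf{f}, \phi\rangle|\leq M\|\phi\|_{L^{1}}$. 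Complex-valued $\phi$ is reduced to the real case by splitting into real and imaginary parts.

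Finally, since $\mathcal{D}(U)$ is dense in $L^{1}(U)$, the restriction of $\mathsf{f}$ extends uniquely to a continuous linear functional on $L^{1}(U)$ of norm at most $M$, and the duality $L^{1}(U)^{\ast}=L^{\infty}(U)$ supplies an $f\in L^{\infty}(U)$ with $\|f\|_{L^{\infty}}\leq M$ such that $\langle \mathsf{f}, \phi\rangle = \int_{U}f\phi\,\mathrm{d}\mathbf{x}$ for every $\phi\in\mathcal{D}(U)$. The main technical obstacle is the mollification step: one must verify that $\phi_{\pm}\ast\rho_{\epsilon}$ stays inside $\mathcal{D}(U)$ and that $\phi\ast\rho_{\epsilon}\to\phi$ in the inductive limit topology of $\mathcal{D}(U)$. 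Both reduce to standard support bookkeeping, but they are essential to keep the argument within the scope of the hypothesis, which only controls $\mathsf{f}$ against test functions supported in $U$.
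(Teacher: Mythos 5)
Your proposal is correct, and its overall skeleton is the same as the paper's: prove the easy direction by H\"{o}lder, upgrade the hypothesis to the bound $\left\vert \left\langle \mathsf{f},\phi\right\rangle \right\vert \leq\mathrm{const}\cdot\left\Vert \phi\right\Vert _{L^{1}\left(  U\right)  }$ on all of $\mathcal{D}\left(  U\right)  $, and then extend by density of $\mathcal{D}\left(  U\right)  $ in $L^{1}\left(  U\right)  $ and the duality $\left(  L^{1}\left(  U\right)  \right)  ^{\prime}\simeq L^{\infty}\left(  U\right)  $. The one step where you genuinely diverge is the reduction of a general real test function to positive normalized ones. The paper invokes its Lemma \ref{Lemma DD 1}: write $\phi=c_{1}\psi_{1}-c_{2}\psi_{2}$ by choosing a smooth majorant $\zeta_{1}\geq\max\left\{  \phi,0\right\}  $ and setting $\zeta_{2}=\zeta_{1}-\phi$, which yields the slightly lossy constant $2M\left\Vert \phi\right\Vert _{L^{1}}$ (or $4M$ in the complex case) --- entirely sufficient for continuity. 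You instead mollify the positive and negative parts $\phi_{\pm}$ and pass to the limit $\phi\ast\rho_{\epsilon}\rightarrow\phi$ in $\mathcal{D}\left(  U\right)  $; the support bookkeeping and the convergence $D^{\alpha}\left(  \phi\ast\rho_{\epsilon}\right)  =\left(  D^{\alpha}\phi\right)  \ast\rho_{\epsilon}\rightarrow D^{\alpha}\phi$ uniformly on a fixed compact set are standard, so this is sound. What your route buys is the sharp constant: you get $\left\vert \left\langle \mathsf{f},\phi\right\rangle \right\vert \leq M\left\Vert \phi\right\Vert _{L^{1}}$ exactly, hence $\left\Vert f\right\Vert _{L^{\infty}\left(  U\right)  }\leq M$, which is in fact the content of the paper's subsequent Lemma \ref{Lemma SL 2} (there obtained by a separate point-value argument). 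The cost is an extra limiting step that the paper's one-line algebraic decomposition avoids.
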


\begin{proof}
If $f\in L^{\infty}\left(  U\right)  .$ Then when $\phi\in\mathcal{D}\left(
U\right)  ,$%
\[
\left\vert \left\langle \mathsf{f}\left(  \mathbf{x}\right)  ,\phi\left(
\mathbf{x}\right)  \right\rangle \right\vert =\left\vert \int_{U}f\left(
\mathbf{x}\right)  \phi\left(  \mathbf{x}\right)  \,\mathrm{d}\mathbf{x}%
\right\vert \leq\left\Vert f\right\Vert _{L^{\infty}\left(  U\right)
}\left\Vert \phi\right\Vert _{L^{1}\left(  U\right)  }\,,
\]
so that if $\phi$ is normalized, $\left\vert \left\langle \mathsf{f}\left(
x\right)  ,\phi\left(  x\right)  \right\rangle \right\vert \leq\left\Vert
f\right\Vert _{L^{\infty}\left(  U\right)  }.$ Therefore (\ref{SLe 1}) holds
with $M=\left\Vert f\right\Vert _{L^{\infty}\left(  U\right)  }.$

Conversely, if (\ref{SLe 1}) is satisfied for some $M>0$ for all positive,
normalized test functions of $U$ then $\left\vert \left\langle \mathsf{f}%
\left(  x\right)  ,\psi\left(  x\right)  \right\rangle \right\vert
\leq2M\left\Vert \psi\right\Vert _{L^{1}\left(  U\right)  }$ for all real test
functions $\psi\in\mathcal{D}\left(  U\right)  ,$ because of Lemma
\ref{Lemma DD 1} (or $4M$ if complex). This means that $\mathsf{f}$ is
continuous in $\mathcal{D}\left(  U\right)  ,$ a dense subspace of
$L^{1}\left(  U\right)  $ with the topology induced by $L^{1}\left(  U\right)
$ in its subspace. Hence, $\mathsf{f}$ admits an extension $f\in\left(
L^{1}\left(  U\right)  \right)  ^{\prime}\simeq L^{\infty}\left(  U\right)  ,$
and this means that%
\begin{equation}
\left\langle \mathsf{f}\left(  \mathbf{x}\right)  ,\psi\left(  \mathbf{x}%
\right)  \right\rangle =\int_{U}f\left(  \mathbf{x}\right)  \psi\left(
\mathbf{x}\right)  \,\mathrm{d}\mathbf{x}\,, \label{SLe 3}%
\end{equation}
for all $\psi\in\mathcal{D}\left(  U\right)  .$ Therefore, $\mathsf{f}$ is a
regular distribution given by the bounded function $f$ in the open set
$U.$\smallskip
\end{proof}

In the proof we can see that $\inf\left\{  M:\text{ (\ref{SLe 1})
holds}\right\}  \leq\left\Vert f\right\Vert _{L^{\infty}\left(  U\right)  }.$
In fact, we have more.\smallskip

\begin{lemma}
\label{Lemma SL 2}If $\mathsf{f}\in\mathcal{D}^{\prime}\left(  \mathbb{R}%
^{d}\right)  $ is a regular distribution in $U,$ given by a bounded function
$f\in L^{\infty}\left(  U\right)  $ then%
\begin{equation}
\left\Vert f\right\Vert _{L^{\infty}\left(  U\right)  }=\inf\left\{  M:\text{
(\ref{SLe 1}) holds for all positive, normalized test functions}\right\}  \,,
\label{SLe 2}%
\end{equation}
and%
\begin{equation}
\left\Vert f\right\Vert _{L^{\infty}\left(  U\right)  }=\sup\left\{
\left\vert \left\langle \mathsf{f},\phi\right\rangle \right\vert :\text{ }%
\phi\in\mathcal{D}\left(  U\right)  \text{ positive, normalized test
function}\right\}  \,. \label{SLe 4}%
\end{equation}

\end{lemma}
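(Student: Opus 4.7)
My plan is first to collapse the two equalities into a single statement, and then exhibit positive normalized test functions whose pairings with $\mathsf{f}$ approach $\|f\|_{L^{\infty}(U)}$.

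Let $I$ denote the infimum in (\ref{SLe 2}) and $S$ the supremum in (\ref{SLe 4}). Since $|\langle \mathsf{f},\phi\rangle|\leq I$ for every positive, normalized $\phi\in\mathcal{D}(U)$, we have $S\leq I$; conversely, $M=S$ itself satisfies (\ref{SLe 1}), so $I\leq S$. Hence $I=S$, and it suffices to prove $S=\|f\|_{L^{\infty}(U)}$. The inequality $S\leq\|f\|_{L^{\infty}(U)}$ is the H\"older estimate already used in the proof of Lemma \ref{SL 1}: for any positive normalized $\phi\in\mathcal{D}(U)$, $|\langle\mathsf{f},\phi\rangle|\leq\|f\|_{L^{\infty}(U)}\|\phi\|_{L^{1}(U)}=\|f\|_{L^{\infty}(U)}$.

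For the reverse inequality I fix $\varepsilon>0$ and produce a specific $\phi$. By the definition of the essential supremum, one of the sets $\{\mathbf{x}\in U:\pm f(\mathbf{x})>\|f\|_{L^{\infty}(U)}-\varepsilon/2\}$ has positive Lebesgue measure; on replacing $f$ by $-f$ if necessary, I may assume this is the one with the $+$ sign. Inner regularity of Lebesgue measure then supplies a compact set $K\subset U$ with $\mu(K)>0$ on which $f>\|f\|_{L^{\infty}(U)}-\varepsilon/2$. Let $\{\rho_{n}\}_{n=1}^{\infty}$ be a standard sequence of positive, normalized $C^{\infty}$ mollifiers with $\operatorname{supp}\rho_{n}\subset\{\mathbf{x}:|\mathbf{x}|<1/n\}$, and set
\[
\phi_{n}=\frac{\chi_{K}\ast\rho_{n}}{\mu(K)}.
\]
Then $\phi_{n}$ is positive, smooth, normalized (since $\int\chi_{K}\ast\rho_{n}=\mu(K)$), and supported in $K+\overline{B(\mathbf{0},1/n)}$, which lies in $U$ for every sufficiently large $n$ because $U$ is open and $K\subset U$ is compact.

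Finally, $\phi_{n}\to\chi_{K}/\mu(K)$ in $L^{1}$ and $f\in L^{\infty}(U)$, so
\[
\langle\mathsf{f},\phi_{n}\rangle=\int_{U}f(\mathbf{x})\,\phi_{n}(\mathbf{x})\,\mathrm{d}\mathbf{x}\longrightarrow\frac{1}{\mu(K)}\int_{K}f(\mathbf{x})\,\mathrm{d}\mathbf{x}\geq\|f\|_{L^{\infty}(U)}-\varepsilon/2,
\]
so $S\geq\|f\|_{L^{\infty}(U)}-\varepsilon$ for every $\varepsilon>0$, whence $S\geq\|f\|_{L^{\infty}(U)}$. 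The one genuinely technical ingredient is the construction of a $C_{c}^{\infty}(U)$ positive normalized bump approximating $\chi_{K}/\mu(K)$; the mollifier argument handles this cleanly, so the proof is routine once the reduction to a near-extremal compact set $K$ is in place.
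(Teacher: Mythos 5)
Your proof is correct, but it takes a genuinely different route from the paper's. The paper first reduces to showing that the common value $K$ of the infimum and supremum satisfies $K\geq\left\Vert f\right\Vert _{L^{\infty}\left(  U\right)  }$, and then, for $s<\left\Vert f\right\Vert _{L^{\infty}\left(  U\right)  }$, invokes the fact that a locally integrable function has a \L ojasiewicz distributional point value at almost every point (its Lebesgue points), picks a point $\mathbf{x}_{0}$ where the point value exists and exceeds $s$ in modulus, and lets the rescaled family $\varphi_{\lambda}$ of a single positive normalized test function concentrate at $\mathbf{x}_{0}$, so that $\left\langle \mathsf{f},\varphi_{\lambda}\right\rangle \rightarrow\mathsf{f}\left(  \mathbf{x}_{0}\right)  .$ You instead avoid point values entirely: you select a near-extremal set of positive measure via the definition of the essential supremum, pass to a compact subset $K\subset U$ by inner regularity, and mollify $\chi_{K}/\mu\left(  K\right)  $ to produce admissible test functions whose pairings converge to the average of $f$ over $K$. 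Your argument is more elementary and self-contained, since it does not rely on the (nontrivial) a.e.\ existence of distributional point values; the paper's argument is shorter given the machinery already in play and, notably, works verbatim for complex-valued $f$, because it tracks $\left\vert \mathsf{f}\left(  \mathbf{x}_{0}\right)  \right\vert $ rather than a sign. Your sign-splitting step implicitly assumes $f$ is real-valued; for complex $f$ you would additionally need to localize to a piece of the near-extremal set on which the argument of $f$ is nearly constant, a routine but necessary extra step. The reduction $I=S$ at the start and the H\"{o}lder bound $S\leq\left\Vert f\right\Vert _{L^{\infty}\left(  U\right)  }$ match the paper exactly.
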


\begin{proof}
Clearly $\inf\left\{  M:\text{ (\ref{SLe 1}) holds for all positive,
normalized test functions}\right\}  $ is equal to $\sup\left\{  \left\vert
\left\langle \mathsf{f},\phi\right\rangle \right\vert :\text{ }\phi
\in\mathcal{D}\left(  U\right)  \text{ positive, normalized test
function}\right\}  ;$ let us call this $K.$ We know that $K\leq\left\Vert
f\right\Vert _{L^{\infty}\left(  U\right)  }.$ To prove the converse
inequality, let $s<\left\Vert f\right\Vert _{L^{\infty}\left(  U\right)  }.$
Then there exists $\mathbf{x}_{0}\in U$ such that the distributional point
value $\mathsf{f}\left(  \mathbf{x}_{0}\right)  $ exists and $s<\left\vert
\mathsf{f}\left(  \mathbf{x}_{0}\right)  \right\vert .$ If $\phi$ is a
positive normalized test function, then so are the test functions
$\varphi_{\lambda}\left(  \mathbf{x}\right)  =\lambda^{d}\phi\left(
\mathbf{x}_{0}+\lambda\mathbf{x}\right)  $ for all $\lambda>0,$ and if
$\lambda$ is big enough, $\varphi_{\lambda}\in\mathcal{D}\left(  U\right)  .$
Since $\lim_{\lambda\rightarrow\infty}\left\langle \mathsf{f},\varphi
_{\lambda}\right\rangle =\mathsf{f}\left(  \mathbf{x}_{0}\right)  ,$ we can
find $\lambda$ such that $\left\vert \left\langle \mathsf{f},\varphi_{\lambda
}\right\rangle \right\vert >s.$ Consequently, $K>s,$ and because $s<\left\Vert
f\right\Vert _{L^{\infty}\left(  U\right)  }$ is arbitrary, $K\geq\left\Vert
f\right\Vert _{L^{\infty}\left(  U\right)  }.$\smallskip
\end{proof}

In fact, the same argument in the proof of Lemma \ref{Lemma SL 2} allows us to
obtain the ensuing.\smallskip

\begin{lemma}
\label{Lemma SL 3}If $\mathsf{f}\in\mathcal{D}^{\prime}\left(  \mathbb{R}%
^{d}\right)  $ is a real regular distribution in $U,$ given by a function
$f\in L^{1}\left(  U\right)  $ then the essential supremum and infimum of $f$
are also given as%
\begin{equation}
\operatorname*{esssup}_{\mathbf{x}\in U}f\left(  x\right)  =\sup_{\phi
\in\mathcal{D}\left(  U\right)  ,\phi\geq0,\int\phi=1}\left\langle
\mathsf{f}\left(  \mathbf{x}\right)  ,\phi\left(  \mathbf{x}\right)
\right\rangle \,, \label{SLe 5}%
\end{equation}
and%
\begin{equation}
\operatorname*{essinf}_{\mathbf{x}\in U}f\left(  \mathbf{x}\right)
=\inf_{\phi\in\mathcal{D}\left(  U\right)  ,\phi\geq0,\int\phi=1}\left\langle
\mathsf{f}\left(  x\right)  ,\phi\left(  x\right)  \right\rangle \,.
\label{SLe 6}%
\end{equation}

\end{lemma}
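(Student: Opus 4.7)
The plan is to prove (\ref{SLe 5}); then (\ref{SLe 6}) follows by applying (\ref{SLe 5}) to the distribution $-\mathsf{f}$ (since $-\operatorname{esssup}(-f)=\operatorname{essinf} f$ and $-\sup\langle -\mathsf{f},\phi\rangle=\inf\langle\mathsf{f},\phi\rangle$ over the same class of $\phi$). Following the template of Lemma \ref{Lemma SL 2}, I denote the right-hand side of (\ref{SLe 5}) by $K$ and prove two inequalities.

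For the easy direction $K\leq\operatorname{esssup}_{\mathbf{x}\in U}f$, I observe that for any positive normalized $\phi\in\mathcal{D}(U)$,
\begin{equation*}
\langle\mathsf{f},\phi\rangle=\int_{U}f(\mathbf{x})\phi(\mathbf{x})\,\mathrm{d}\mathbf{x}\leq\Bigl(\operatorname*{esssup}_{\mathbf{x}\in U}f(\mathbf{x})\Bigr)\int_{U}\phi(\mathbf{x})\,\mathrm{d}\mathbf{x}=\operatorname*{esssup}_{\mathbf{x}\in U}f(\mathbf{x}),
\end{equation*}
and taking the sup over $\phi$ gives $K\leq\operatorname{esssup} f$. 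Note the drop of the absolute value compared with Lemma \ref{Lemma SL 2}: we only need the one-sided bound, which is what makes the signed statement work.

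For the reverse inequality, fix $s<\operatorname{esssup}_{\mathbf{x}\in U}f$. By definition of essential supremum, the set $A=\{\mathbf{x}\in U:f(\mathbf{x})>s\}$ has positive Lebesgue measure. Since $f\in L^{1}(U)$, by the classical theorem (already invoked in Lemma \ref{Lemma SL 2} and recalled in Section \ref{Section:Preliminaries}) the distributional point value $\mathsf{f}(\mathbf{x}_{0})$ exists and coincides with $f(\mathbf{x}_{0})$ at every Lebesgue point of $f$, hence at almost every $\mathbf{x}_{0}\in U$. Intersecting this full-measure set with $A$ yields a positive-measure subset, from which I pick any $\mathbf{x}_{0}$; then $\mathsf{f}(\mathbf{x}_{0})=f(\mathbf{x}_{0})>s$.

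With this $\mathbf{x}_{0}$, the scaling argument of Lemma \ref{Lemma SL 2} applies verbatim: given any fixed positive normalized $\phi\in\mathcal{D}(\mathbb{R}^{d})$, the rescaled test functions $\varphi_{\lambda}$ centered at $\mathbf{x}_{0}$ remain positive and normalized, they lie in $\mathcal{D}(U)$ once $\lambda$ is large enough, and $\lim_{\lambda\to\infty}\langle\mathsf{f},\varphi_{\lambda}\rangle=\mathsf{f}(\mathbf{x}_{0})>s$. Therefore some such $\varphi_{\lambda}$ witnesses $\langle\mathsf{f},\varphi_{\lambda}\rangle>s$, giving $K>s$. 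Since $s<\operatorname{esssup} f$ was arbitrary, $K\geq\operatorname{esssup} f$, completing the proof. The one potentially delicate point is ensuring that the point $\mathbf{x}_{0}$ exists where simultaneously $f(\mathbf{x}_{0})>s$ and the distributional point value exists and equals $f(\mathbf{x}_{0})$; this is handled by the Lebesgue-point characterization and is the only ingredient beyond what already appears in Lemma \ref{Lemma SL 2}.
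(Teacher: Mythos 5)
Your proof is correct and follows essentially the route the paper intends: the paper gives no separate proof of Lemma \ref{Lemma SL 3}, stating only that ``the same argument in the proof of Lemma \ref{Lemma SL 2}'' applies, and that is exactly what you carry out --- the one-sided integral bound for the easy inequality and the Lebesgue-point/rescaling argument for the reverse one, with the reduction of (\ref{SLe 6}) to (\ref{SLe 5}) via $-\mathsf{f}$. Your explicit justification that a suitable $\mathbf{x}_{0}$ exists (a Lebesgue point of the $L^{1}$ function lying in the positive-measure set where $f>s$) is a detail the paper leaves implicit even in Lemma \ref{Lemma SL 2}, and it is handled correctly.
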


We notice that when $f\in L^{1}\left(  U\right)  $ then (\ref{SLe 5})$\ $could
be $+\infty$ and (\ref{SLe 6})$\ $could be $-\infty.$\smallskip

\section{Comparison of definitions\label{Section: Comparison of definitions}}

We will now study whether the existence of the distributional point value
$f\left(  \mathbf{x}_{0}\right)  $ is equivalent to the existence of $f\left(
\mathbf{x}_{0}\right)  $ \ $\left(  \mathfrak{F}\right)  $ for several
families of delta sequences $\mathfrak{F}.$

\subsection{Standard delta sequences generated by a positive normalized test
function\label{SubS:Standard delta sequences generated by a positive normalized test function}%
}

In this section we consider the family $\mathfrak{F}$ of standard delta
sequences generated by a positive normalized test function of $\mathcal{D}%
\left(  \mathbb{R}^{d}\right)  .$

\begin{proposition}
\label{Prop. DS 2}Let $f\in\mathcal{D}^{\prime}\left(  \mathbb{R}^{d}\right)
.$ Then $f$ has a thick distributional point value at $\mathbf{x}_{0}$ if and
only if for all standard delta sequences generated by a positive normalized
test function of $\mathcal{D}\left(  \mathbb{R}^{d}\right)  ,\ \left\{
\phi_{n}\right\}  _{n=1}^{\infty},$ the limit%
\begin{equation}
\lim_{n\rightarrow\infty}\left\langle f\left(  \mathbf{x}_{0}+\mathbf{x}%
\right)  ,\phi_{n}\left(  \mathbf{x}\right)  \right\rangle =\gamma_{\left\{
\phi_{n}\right\}  }\,, \label{DD 6}%
\end{equation}
exists.
\end{proposition}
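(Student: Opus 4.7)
The plan is to treat the two implications separately, with the forward direction reducing to a routine change of variables and the converse resting on a reduction to Proposition \ref{Prop. DD 1} via a dilation trick.

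For the forward direction, I would assume that the thick distributional point value exists, i.e.\ $f(\mathbf{x}_{0}+\varepsilon\mathbf{x})\to g(\mathbf{x})$ in $\mathcal{D}^{\prime}(\mathbb{R}^{d})$ as $\varepsilon\to0^{+}$, where $g$ is homogeneous of degree zero. Given any standard delta sequence $\phi_{n}(\mathbf{x})=n^{d}\phi(n\mathbf{x})$ generated by a positive normalized test function $\phi$, the change of variables $\mathbf{y}=n\mathbf{x}$ gives
$$\langle f(\mathbf{x}_{0}+\mathbf{x}),\phi_{n}(\mathbf{x})\rangle=\langle f(\mathbf{x}_{0}+\mathbf{y}/n),\phi(\mathbf{y})\rangle\longrightarrow\langle g,\phi\rangle,$$
so (\ref{DD 6}) holds with $\gamma_{\{\phi_{n}\}}=\langle g,\phi\rangle$.

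The converse is the substantive direction. Fix a positive normalized test function $\phi$ and define the continuous function
$$F_{\phi}(\varepsilon):=\langle f(\mathbf{x}_{0}+\varepsilon\mathbf{x}),\phi(\mathbf{x})\rangle,\qquad\varepsilon>0.$$
The hypothesis gives convergence of $F_{\phi}(1/n)$. The key observation is that the dilate $\phi^{(a)}(\mathbf{x}):=a^{d}\phi(a\mathbf{x})$ is \emph{itself} a positive normalized test function for every $a>0$, and the same change of variables identifies
$$\langle f(\mathbf{x}_{0}+\mathbf{x}),(\phi^{(a)})_{n}(\mathbf{x})\rangle=F_{\phi}(1/(na)).$$
Applying the hypothesis to $\phi^{(a)}$ therefore yields convergence of $F_{\phi}(1/(na))$ as $n\to\infty$, for every $a>0$. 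Setting $h(x):=F_{\phi}(1/x)$, the continuous function $h$ on $(0,\infty)$ satisfies the hypotheses of Proposition \ref{Prop. DD 1}, so $h(x)\to L_{\phi}$ as $x\to\infty$; equivalently, $F_{\phi}(\varepsilon)\to L_{\phi}$ as $\varepsilon\to0^{+}$.

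Having established that $\lim_{\varepsilon\to0^{+}}\langle f(\mathbf{x}_{0}+\varepsilon\mathbf{x}),\phi(\mathbf{x})\rangle$ exists for every positive normalized $\phi$, Lemma \ref{Lemma DD 1} promotes this to all $\phi\in\mathcal{D}(\mathbb{R}^{d})$ by writing each test function as a linear combination of two positive normalized ones. Sequential completeness of $\mathcal{D}^{\prime}(\mathbb{R}^{d})$ then produces a distributional limit $g$ with $f(\mathbf{x}_{0}+\varepsilon\mathbf{x})\to g(\mathbf{x})$, and by the discussion in Section \ref{Section:Preliminaries} such a $g$ is automatically homogeneous of degree zero, so $f$ has a thick distributional point value at $\mathbf{x}_{0}$.

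The main obstacle is clearly the converse: one has \emph{a priori} only the limits along the single sequence $\{1/n\}_{n=1}^{\infty}$ for each test function, and one needs the limit as $\varepsilon\to0^{+}$ through a continuous variable. The dilation trick $\phi\mapsto\phi^{(a)}$ is what provides access to every sequence $\{1/(na)\}_{n=1}^{\infty}$ simultaneously and thereby unlocks Proposition \ref{Prop. DD 1}; everything else (the change of variables, the Hahn--Banach style extension via Lemma \ref{Lemma DD 1}, and the homogeneity of the limit) is bookkeeping.
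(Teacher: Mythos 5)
Your proposal is correct and follows essentially the same route as the paper: the forward direction is the same change of variables, and your converse (dilating $\phi$ to $\phi^{(a)}$ so that $F_{\phi}(1/(na))$ converges for every $a>0$, invoking Proposition \ref{Prop. DD 1} on the continuous function $h(x)=F_{\phi}(1/x)$, extending to all test functions via Lemma \ref{Lemma DD 1}, and concluding by homogeneity of degree zero) is exactly the paper's argument with $\Phi(a)=\langle f(\mathbf{x}_{0}+\mathbf{x}),a^{d}\phi(a\mathbf{x})\rangle$ in place of your $h$. The only cosmetic difference is that you phrase the final step through weak completeness of $\mathcal{D}^{\prime}$ while the paper directly cites \cite[Thm.\ 2.6.2]{EK2002} to identify the homogeneous limit with a distribution on the sphere.
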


\begin{proof}
A standard delta sequences generated by a normalized positive test function
$\phi$ is of the form $\phi_{n}\left(  \mathbf{x}\right)  =n^{d}\phi\left(
n\mathbf{x}\right)  .$ If the distributional thick point value $f_{\mathbf{x}%
_{0}}\left(  \mathbf{w}\right)  =\gamma\left(  \mathbf{w}\right)  $ exists,
$\gamma\in\mathcal{D}^{\prime}\left(  \mathbb{S}\right)  ,$ then
\begin{align*}
\lim_{n\rightarrow\infty}\left\langle f\left(  \mathbf{x}_{0}+\mathbf{x}%
\right)  ,\phi_{n}\left(  \mathbf{x}\right)  \right\rangle  &  =\lim
_{n\rightarrow\infty}\left\langle f\left(  \mathbf{x}_{0}+\mathbf{x}\right)
,n^{d}\phi\left(  n\mathbf{x}\right)  \right\rangle \\
&  =\lim_{n\rightarrow\infty}\left\langle f\left(  \mathbf{x}_{0}+\left(
1/n\right)  \mathbf{x}\right)  ,\phi\left(  \mathbf{x}\right)  \right\rangle
\\
&  =\int_{0}^{\infty}\left\langle \gamma\left(  \mathbf{w}\right)
,\phi\left(  r\mathbf{w}\right)  \right\rangle _{\mathcal{D}^{\prime}\left(
\mathbb{S}\right)  \times\mathcal{D}\left(  \mathbb{S}\right)  }%
r^{d-1}\,\mathrm{d}r\,,
\end{align*}
exists. Conversely, let $\phi$ be a normalized positive test function.\ If the
limit (\ref{DD 6}) exists for all standard delta sequences generated by a
positive normalized test function, it will exist for $\phi_{n}^{\left\{
a\right\}  }\left(  \mathbf{x}\right)  =n^{d}a^{d}\phi\left(  na\mathbf{x}%
\right)  $ for all $a>0.$ Consequently, if the function $\Phi$ is defined as
\begin{equation}
\Phi\left(  a\right)  =\left\langle f\left(  \mathbf{x}_{0}+\mathbf{x}\right)
,a^{d}\phi\left(  a\mathbf{x}\right)  \right\rangle
\,,\ \ \ a>0\,,\label{DD 7}%
\end{equation}
then
\begin{equation}
\lim_{n\rightarrow\infty}\Phi\left(  na\right)  =\lim_{n\rightarrow\infty
}\left\langle f\left(  \mathbf{x}_{0}+\mathbf{x}\right)  ,\phi_{n}^{\left\{
a\right\}  }\left(  \mathbf{x}\right)  \right\rangle =\gamma_{\left\{
\phi_{n}^{\left\{  a\right\}  }\right\}  }\,,\label{DD 8}%
\end{equation}
exists for all $a.$ Since $\Phi$ is continuous, Proposition \ref{Prop. DD 1}
yields that $\gamma_{\left\{  \phi_{n}^{\left\{  a\right\}  }\right\}
}=\gamma_{0}\left(  \phi\right)  $ is independent of $a$ and actually
$\lim_{\lambda\rightarrow\infty}\Phi\left(  \lambda\right)  =\gamma_{0}\left(
\phi\right)  .$ Hence,%
\begin{equation}
\lim_{\varepsilon\rightarrow0^{+}}\left\langle f\left(  \mathbf{x}%
_{0}+\varepsilon\mathbf{x}\right)  ,\phi\left(  \mathbf{x}\right)
\right\rangle =\gamma_{0}\left(  \phi\right)  \,,\label{DD 10}%
\end{equation}
for all normalized positive test functions. Therefore, Lemma \ref{Lemma DD 1}
yields that$\ $the limit $\lim_{\varepsilon\rightarrow0^{+}}\left\langle
f\left(  x_{0}+\varepsilon x\right)  ,\phi\left(  x\right)  \right\rangle
=\gamma_{0}\left(  \phi\right)  $ exists whenever $\phi\in\mathcal{D}\left(
\mathbb{R}^{d}\right)  .$ The formula $\left\langle \gamma_{0},\phi
\right\rangle =\gamma_{0}\left(  \phi\right)  ,$ defines a distribution
$\gamma_{0}\in\mathcal{D}^{\prime}\left(  \mathbb{R}^{d}\right)  ,$ and
$\gamma_{0}$ is homogeneous of degree $0,$ that is, $\gamma_{0}\left(
t\mathbf{x}\right)  =\gamma_{0}\left(  \mathbf{x}\right)  ,$ $t>0.$ As
explained in Section \ref{Section:Preliminaries}, using \cite[Thm.
2.6.2]{EK2002} we conclude that $\gamma_{0}$ is obtained from a distribution
$\gamma\in\mathcal{D}^{\prime}\left(  \mathbb{S}\right)  $ by the formula%
\begin{equation}
\left\langle \gamma_{0},\phi\right\rangle =\int_{0}^{\infty}\left\langle
\alpha\left(  \mathbf{w}\right)  ,\phi\left(  r\mathbf{w}\right)
\right\rangle _{\mathcal{D}^{\prime}\left(  \mathbb{S}\right)  \times
\mathcal{D}\left(  \mathbb{S}\right)  }r^{d-1}\,\mathrm{d}r\,,\label{DD 10b}%
\end{equation}
and that $\alpha$ is the thick distributional value of $f$ at $\mathbf{x}%
_{0}.$\smallskip
\end{proof}

Let $\left\{  \phi_{n}\right\}  _{n=1}^{\infty}$ be a sequence of test
functions. If $T$ is an orthogonal transformation of $\mathbb{R}^{d},$ that
is, with $\left\vert \det T\right\vert =1,$ then the sequence $\left\{
\phi_{n}^{T}\right\}  _{n=1}^{\infty},$ where $\phi^{T}\left(  \mathbf{x}%
\right)  =\phi\left(  T\mathbf{x}\right)  ,$ is also a delta sequence. We have
then the following result.\smallskip

\begin{proposition}
\label{Prop. DS 2a}Let $f\in\mathcal{D}^{\prime}\left(  \mathbb{R}^{d}\right)
.$ Then the distributional point value $f\left(  \mathbf{x}_{0}\right)  $
exists if and only if for all standard delta sequences generated by a positive
normalized test function of $\mathcal{D}\left(  \mathbb{R}^{d}\right)
,\ \left\{  \phi_{n}\right\}  _{n=1}^{\infty},$ the limit $\lim_{n\rightarrow
\infty}\left\langle f\left(  \mathbf{x}_{0}+\mathbf{x}\right)  ,\phi
_{n}\left(  \mathbf{x}\right)  \right\rangle =\gamma_{\left\{  \phi
_{n}\right\}  }\,$exists and for all orthogonal transformations $T$ of
$\mathbb{R}^{d},$ $\gamma_{\left\{  \phi_{n}^{T}\right\}  }=\gamma_{\left\{
\phi_{n}\right\}  }.$

\begin{proof}
This follows immediately from Proposition \ref{Prop. DS 2} if we observe that
a homogeneous function or distribution of degree $0$ is a constant if and only
if it is invariant with respect to orthogonal transformations.\smallskip
\end{proof}
\end{proposition}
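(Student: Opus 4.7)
The plan is to reduce everything to the thick point value version already established in Proposition \ref{Prop. DS 2}, and then use the hinted algebraic fact: a homogeneous distribution of degree $0$ on $\mathbb{R}^{d}$ is a constant if and only if it is invariant under orthogonal transformations (equivalently, the sphere distribution $\alpha \in \mathcal{D}'(\mathbb{S})$ associated with such a distribution via \eqref{Pre 6} is invariant under the transitive action of $O(d)$, hence constant).

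For the forward direction, suppose the ordinary distributional point value $f(\mathbf{x}_{0}) = \gamma$ exists. Then in particular the thick point value exists and coincides with the constant distribution $\gamma$ on $\mathbb{S}$, so by Proposition \ref{Prop. DS 2} each limit $\gamma_{\{\phi_{n}\}}$ exists. Moreover, using the formula from the proof of that proposition and the normalization of $\phi$,
\begin{equation*}
\gamma_{\{\phi_{n}\}} = \int_{0}^{\infty}\!\!\!\int_{\mathbb{S}} \gamma\,\phi(r\mathbf{w})\,\mathrm{d}\sigma(\mathbf{w})\,r^{d-1}\,\mathrm{d}r = \gamma\int_{\mathbb{R}^{d}}\phi(\mathbf{x})\,\mathrm{d}\mathbf{x} = \gamma,
\end{equation*}
which is manifestly independent of $\phi$; in particular it is unchanged when $\phi$ is replaced by $\phi^{T}$. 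Noting that the standard delta sequence generated by $\phi^{T}$ is exactly $\{\phi_{n}^{T}\}$ (since $n^{d}\phi^{T}(n\mathbf{x}) = n^{d}\phi(nT\mathbf{x}) = \phi_{n}(T\mathbf{x})$), the invariance $\gamma_{\{\phi_{n}^{T}\}} = \gamma_{\{\phi_{n}\}}$ follows.

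For the converse, assume the limits $\gamma_{\{\phi_{n}\}}$ exist and are orthogonally invariant in the stated sense. By Proposition \ref{Prop. DS 2}, the thick distributional point value $\alpha \in \mathcal{D}'(\mathbb{S})$ exists, and the proof there produces the degree-$0$ homogeneous distribution $\gamma_{0}$ defined via \eqref{DD 10b} satisfying $\gamma_{\{\phi_{n}\}} = \langle \gamma_{0},\phi\rangle$ for every positive normalized $\phi$. Since $T$ is orthogonal, $\phi^{T}$ is again positive and normalized, and by the change of variables $\mathbf{y} = T\mathbf{x}$ one has $\langle \gamma_{0},\phi^{T}\rangle = \langle \gamma_{0}\circ T^{-1},\phi\rangle$. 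The hypothesis $\gamma_{\{\phi_{n}^{T}\}} = \gamma_{\{\phi_{n}\}}$ therefore gives $\langle \gamma_{0}\circ T^{-1},\phi\rangle = \langle \gamma_{0},\phi\rangle$ for every positive normalized $\phi$, and hence, by Lemma \ref{Lemma DD 1}, for every $\phi \in \mathcal{D}(\mathbb{R}^{d})$. Thus $\gamma_{0}$ is orthogonally invariant, and being also homogeneous of degree $0$, it is a constant $\gamma$. Translating back, this means $\alpha$ is the constant $\gamma$ on $\mathbb{S}$, so the thick value is constant and the ordinary distributional point value $f(\mathbf{x}_{0}) = \gamma$ exists.

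The main obstacle, modest in light of Proposition \ref{Prop. DS 2}, is the bookkeeping that identifies the standard delta sequence generated by $\phi^{T}$ with $\{\phi_{n}^{T}\}$ and the subsequent translation of the invariance condition on scalar limits into the invariance of the homogeneous degree-$0$ distribution $\gamma_{0}$; once this is done, invoking the fact that only constants among such distributions are orthogonally invariant closes the argument.
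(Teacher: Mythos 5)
Your proposal is correct and follows essentially the same route as the paper: reduce to Proposition \ref{Prop. DS 2} and invoke the fact that a homogeneous distribution of degree $0$ is constant if and only if it is orthogonally invariant. You have simply filled in the bookkeeping (the identification of the sequence generated by $\phi^{T}$ with $\{\phi_{n}^{T}\}$, and the passage via Lemma \ref{Lemma DD 1} from invariance on positive normalized test functions to invariance of $\gamma_{0}$) that the paper leaves implicit.
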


Notice that in one variable, Proposition \ref{Prop. DS 2} says that
$\lim_{n\rightarrow\infty}\left\langle f\left(  x_{0}+x\right)  ,\phi
_{n}\left(  x\right)  \right\rangle =\gamma_{\left\{  \phi_{n}\right\}  }$
exists for all standard delta sequences generated by a positive normalized
test function if and only if there are constants $\gamma_{+}$ and $\gamma_{-}$
such that%
\begin{equation}
\lim_{\varepsilon\rightarrow0^{+}}\left\langle f\left(  x_{0}+\varepsilon
x\right)  ,\psi\left(  x\right)  \right\rangle =\gamma_{-}\int_{-\infty}%
^{0}\psi\left(  x\right)  \,\mathrm{d}x+\gamma_{+}\int_{0}^{\infty}\psi\left(
x\right)  \,\mathrm{d}x\,, \label{DD 10c}%
\end{equation}
for all $\psi\in\mathcal{D}\left(  \mathbb{R}\right)  .$ On the other hand,
since the only orthogonal transformations in dimension one are the identity
and $x\rightsquigarrow-x,$ Proposition \ref{Prop. DS 2a}\ says that the
distributional point value $f\left(  x_{0}\right)  $ exists if and only if for
all standard delta sequences generated by a positive normalized test function
of $\mathcal{D}\left(  \mathbb{R}\right)  ,\ \left\{  \phi_{n}\right\}
_{n=1}^{\infty},$ the limit $\lim_{n\rightarrow\infty}\left\langle f\left(
x_{0}+x\right)  ,\phi_{n}\left(  x\right)  \right\rangle =\gamma_{\left\{
\phi_{n}\right\}  }\,$exists and $\gamma_{\left\{  \phi_{n}\left(  -x\right)
\right\}  }=\gamma_{\left\{  \phi_{n}\left(  x\right)  \right\}  }.$

Our results also give the ensuing equivalence.\smallskip

\begin{proposition}
\label{Prop. DS 3}Let $f\in\mathcal{D}^{\prime}\left(  \mathbb{R}^{d}\right)
.$ Then the distributional point value $f\left(  \mathbf{x}_{0}\right)  $
exists and equals $\gamma$ if and only if for $\mathfrak{F}\ $the family of
standard delta sequences generated by a positive normalized test function
\begin{equation}
f\left(  \mathbf{x}_{0}\right)  =\gamma\ \left(  \mathfrak{F}\right)  \,.
\label{DD 10d}%
\end{equation}

\end{proposition}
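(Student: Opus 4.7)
The plan is to show the two directions separately, exploiting Propositions \ref{Prop. DS 2} and \ref{Prop. DS 2a} to reduce all the substantive work to what has already been done.

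For the forward implication, suppose the distributional point value $f(\mathbf{x}_0)=\gamma$ exists. Then $f(\mathbf{x}_0+\varepsilon\mathbf{x})\to\gamma$ distributionally as $\varepsilon\to0^{+}$, so that for every test function $\phi\in\mathcal{D}(\mathbb{R}^{d})$,
\begin{equation*}
\lim_{\varepsilon\rightarrow0^{+}}\bigl\langle f(\mathbf{x}_{0}+\varepsilon\mathbf{x}),\phi(\mathbf{x})\bigr\rangle=\gamma\int_{\mathbb{R}^{d}}\phi(\mathbf{x})\,\mathrm{d}\mathbf{x}.
\end{equation*}
Given a positive normalized test function $\phi$, the associated standard delta sequence is $\phi_{n}(\mathbf{x})=n^{d}\phi(n\mathbf{x})$, and a change of variables yields $\langle f(\mathbf{x}_{0}+\mathbf{x}),\phi_{n}(\mathbf{x})\rangle=\langle f(\mathbf{x}_{0}+(1/n)\mathbf{x}),\phi(\mathbf{x})\rangle$. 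Letting $n\to\infty$ and using $\int\phi=1$, the right-hand side tends to $\gamma$. Thus $f(\mathbf{x}_{0})=\gamma$ $(\mathfrak{F})$.

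For the converse, suppose $f(\mathbf{x}_{0})=\gamma$ $(\mathfrak{F})$, meaning that for every standard delta sequence $\{\phi_{n}\}$ generated by a positive normalized test function, the limit $\gamma_{\{\phi_{n}\}}=\lim_{n\to\infty}\langle f(\mathbf{x}_{0}+\mathbf{x}),\phi_{n}(\mathbf{x})\rangle$ exists and equals the same constant $\gamma$. For any orthogonal transformation $T$ of $\mathbb{R}^{d}$, the sequence $\{\phi_{n}^{T}\}$ is again a standard delta sequence generated by the positive normalized test function $\phi\circ T$, so by hypothesis $\gamma_{\{\phi_{n}^{T}\}}=\gamma=\gamma_{\{\phi_{n}\}}$. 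The invariance condition of Proposition \ref{Prop. DS 2a} is therefore automatically satisfied, and that proposition immediately yields that the distributional point value $f(\mathbf{x}_{0})$ exists. Finally, by the forward direction already proved, this point value must equal $\gamma_{\{\phi_{n}\}}=\gamma$ for any one (hence all) standard delta sequence(s) generated by a positive normalized test function, which completes the argument.

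There is no real obstacle here: the only place a nontrivial argument is used is hidden in the earlier propositions, which do the work of passing from discrete limits over standard delta sequences to the continuous distributional limit via Proposition \ref{Prop. DD 1} and the characterization of homogeneous distributions of degree zero. The statement under consideration is essentially a clean packaging of Propositions \ref{Prop. DS 2} and \ref{Prop. DS 2a}, obtained by observing that assuming all the discrete limits equal one common constant $\gamma$ trivially enforces both the existence of the limits and their invariance under orthogonal transformations.
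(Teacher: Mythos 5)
Your proof is correct, and it matches the paper's intent: the paper gives no separate argument for this proposition, presenting it as an immediate consequence of Propositions \ref{Prop. DS 2} and \ref{Prop. DS 2a}, which is exactly the reduction you carry out (together with the standard change-of-variables computation for the forward direction and the observation that a common value $\gamma$ forces the orthogonal-invariance condition).
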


\subsection{Standard delta sequences generated by an even positive normalized
test
function\label{Subsection:Standard delta sequences generated by an even positive normalized test function}%
}

We now consider the case of symmetric standard delta sequences, the family
considered by Sasane \cite{Sasane}.

We first need to explain the idea of symmetric point values. Let
$f\in\mathcal{D}^{\prime}\left(  \mathbb{R}\right)  $ and $x_{0}\in
\mathbb{R}.$ The symmetric distributional point value of $f$ exists at $x_{0}$
and equals $\gamma$ if%
\begin{equation}
\lim_{\varepsilon\rightarrow0}\frac{f\left(  x_{0}+\varepsilon x\right)
+f\left(  x_{0}-\varepsilon x\right)  }{2}=\gamma\,, \label{Sy 1}%
\end{equation}
in $\mathcal{D}^{\prime}\left(  \mathbb{R}\right)  .$ Each distribution can be
written as the sum of an even one and an odd one,
\begin{equation}
g=g_{\mathrm{e}}+g_{\mathrm{o}}\,, \label{Sy 2}%
\end{equation}
where%
\begin{equation}
g_{\mathrm{e}}\left(  x\right)  =\frac{g\left(  x\right)  +g\left(  -x\right)
}{2}\,,\ \ \ \ g_{\mathrm{o}}\left(  x\right)  =\frac{g\left(  x\right)
-g\left(  -x\right)  }{2}\,. \label{Sy 3}%
\end{equation}
Applying this to $g\left(  x\right)  =f\left(  x_{0}+x\right)  ,$ we see that
the distributional symmetric value $f\left(  x_{0}\right)  $ exists and equals
$\gamma$ if and only if the distributional value $g_{\mathrm{e}}\left(
0\right)  $ exists and equals $\gamma.$

Notice also that if $\phi$ is a test function and we write $\phi
=\phi_{\mathrm{e}}+\phi_{\mathrm{o}},$ then%
\begin{equation}
\left\langle g,\phi\right\rangle =\left\langle g_{\mathrm{e}},\phi
_{\mathrm{e}}\right\rangle +\left\langle g_{\mathrm{o}},\phi_{\mathrm{o}%
}\right\rangle \,. \label{Sy 4}%
\end{equation}
Therefore we have the following result.\smallskip

\begin{lemma}
\label{Lemma DD 1a}A distribution $f\in\mathcal{D}^{\prime}\left(
\mathbb{R}\right)  $ has a symmetric distributional value $\gamma$ at $x_{0}$
if and only if
\begin{equation}
\lim_{\varepsilon\rightarrow0}\left\langle f\left(  x_{0}+\varepsilon
x\right)  ,\phi_{\mathrm{e}}\left(  x\right)  \right\rangle =\gamma
\int_{-\infty}^{\infty}\phi_{\mathrm{e}}\left(  x\right)  \,\mathrm{d}x\,,
\label{Sy 5}%
\end{equation}
for all even test functions $\phi_{\mathrm{e}}.$
\end{lemma}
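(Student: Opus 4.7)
The plan is to reduce the two-sided statement to a distributional point value problem for the even part of the translated distribution, and then exploit the fact that the pairing of an even distribution with an odd test function vanishes.

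First, set $g(x)=f(x_{0}+x)$ and write $g=g_{\mathrm{e}}+g_{\mathrm{o}}$ as in (\ref{Sy 3}). By the discussion just preceding the lemma, the symmetric distributional value of $f$ at $x_{0}$ equals $\gamma$ precisely when the ordinary distributional point value $g_{\mathrm{e}}(0)$ equals $\gamma$, that is, when
\begin{equation*}
\lim_{\varepsilon\rightarrow 0}\left\langle g_{\mathrm{e}}(\varepsilon x),\psi(x)\right\rangle =\gamma\int_{-\infty}^{\infty}\psi(x)\,\mathrm{d}x
\end{equation*}
for every $\psi\in\mathcal{D}(\mathbb{R})$. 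So the task is to show that this full statement is equivalent to its restriction to even test functions, combined with the replacement of $g_{\mathrm{e}}$ by $g$ in the left-hand side.

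For the forward direction I would start from $g_{\mathrm{e}}(0)=\gamma$ and test against an arbitrary even $\phi_{\mathrm{e}}$. Using the dilation-compatible version of (\ref{Sy 4}), namely $\langle g(\varepsilon x),\phi_{\mathrm{e}}(x)\rangle=\langle g_{\mathrm{e}}(\varepsilon x),\phi_{\mathrm{e}}(x)\rangle+\langle g_{\mathrm{o}}(\varepsilon x),\phi_{\mathrm{e}}(x)\rangle$, the second term vanishes identically because the pairing of an odd distribution against an even test function is zero. Hence $\langle f(x_{0}+\varepsilon x),\phi_{\mathrm{e}}(x)\rangle=\langle g_{\mathrm{e}}(\varepsilon x),\phi_{\mathrm{e}}(x)\rangle\to \gamma\int\phi_{\mathrm{e}}$, which is (\ref{Sy 5}).

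For the converse, assume (\ref{Sy 5}) holds. Again by the vanishing observation, $\langle g(\varepsilon x),\phi_{\mathrm{e}}(x)\rangle=\langle g_{\mathrm{e}}(\varepsilon x),\phi_{\mathrm{e}}(x)\rangle$, so the hypothesis gives $\lim_{\varepsilon\to 0}\langle g_{\mathrm{e}}(\varepsilon x),\phi_{\mathrm{e}}(x)\rangle=\gamma\int\phi_{\mathrm{e}}$ for all even $\phi_{\mathrm{e}}$. To upgrade this to all test functions, take an arbitrary $\psi\in\mathcal{D}(\mathbb{R})$ and decompose $\psi=\psi_{\mathrm{e}}+\psi_{\mathrm{o}}$. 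Since $g_{\mathrm{e}}(\varepsilon\cdot)$ is still even, $\langle g_{\mathrm{e}}(\varepsilon x),\psi_{\mathrm{o}}(x)\rangle=0$, while $\int\psi_{\mathrm{o}}=0$ because $\psi_{\mathrm{o}}$ is odd. Thus
\begin{equation*}
\lim_{\varepsilon\to 0}\langle g_{\mathrm{e}}(\varepsilon x),\psi(x)\rangle=\lim_{\varepsilon\to 0}\langle g_{\mathrm{e}}(\varepsilon x),\psi_{\mathrm{e}}(x)\rangle=\gamma\int\psi_{\mathrm{e}}=\gamma\int\psi,
\end{equation*}
so $g_{\mathrm{e}}(0)=\gamma$ in the ordinary sense, which by the preliminary reduction is the existence of the symmetric distributional value.

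The only substantive point to verify carefully is the pairing-vanishing lemma (even distribution against odd test function, and vice versa, is zero), which follows from a change of variables $x\mapsto -x$; no genuine obstacle arises. The rest is bookkeeping of the even/odd decomposition and the fact that scaling $x\mapsto\varepsilon x$ preserves parity.
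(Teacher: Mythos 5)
Your proposal is correct and follows essentially the same route as the paper: both arguments rest on the even/odd decomposition $g=g_{\mathrm{e}}+g_{\mathrm{o}}$ of $g(x)=f(x_{0}+x)$, the vanishing of the pairing of $g_{\mathrm{o}}(\varepsilon x)$ against even test functions (and of $g_{\mathrm{e}}(\varepsilon x)$ against odd ones, together with $\int\psi_{\mathrm{o}}=0$), and the identification of the symmetric value with the ordinary point value $g_{\mathrm{e}}(0)$. The only cosmetic difference is that in the forward direction the paper invokes the hypothesis directly in the form of the limit of the symmetric average $\tfrac{1}{2}\left(f(x_{0}+\varepsilon x)+f(x_{0}-\varepsilon x)\right)$, whereas you phrase it via $g_{\mathrm{e}}(0)=\gamma$; these are the same statement by the discussion preceding the lemma.
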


\begin{proof}
Indeed, if (\ref{Sy 1}) is satisfied, then%
\begin{align*}
\lim_{\varepsilon\rightarrow0}\left\langle f\left(  x_{0}+\varepsilon
x\right)  ,\phi_{\mathrm{e}}\left(  x\right)  \right\rangle  &  =\lim
_{\varepsilon\rightarrow0}\left\langle f\left(  x_{0}+\varepsilon x\right)
-g_{\mathrm{o}}\left(  \varepsilon x\right)  ,\phi_{\mathrm{e}}\left(
x\right)  \right\rangle \\
&  =\lim_{\varepsilon\rightarrow0}\left\langle \frac{f\left(  x_{0}%
+\varepsilon x\right)  +f\left(  x_{0}-\varepsilon x\right)  }{2}%
,\phi_{\mathrm{e}}\left(  x\right)  \right\rangle \\
&  =\gamma\int_{-\infty}^{\infty}\phi_{\mathrm{e}}\left(  x\right)
\,\mathrm{d}x\,.
\end{align*}

Conversely, if (\ref{Sy 5}) holds, then for any test function $\phi
=\phi_{\mathrm{e}}+\phi_{\mathrm{o}},$
\begin{align*}
\lim_{\varepsilon\rightarrow0}\left\langle g_{\mathrm{e}}\left(  \varepsilon
x\right)  ,\phi\left(  x\right)  \right\rangle  &  =\lim_{\varepsilon
\rightarrow0}\left\langle g_{\mathrm{e}}\left(  \varepsilon x\right)
,\phi_{\mathrm{e}}\left(  x\right)  \right\rangle \\
&  =\lim_{\varepsilon\rightarrow0}\left\langle f\left(  x_{0}+\varepsilon
x\right)  ,\phi_{\mathrm{e}}\left(  x\right)  \right\rangle \\
&  =\gamma\int_{-\infty}^{\infty}\phi_{\mathrm{e}}\left(  x\right)
\,\mathrm{d}x\\
&  =\gamma\int_{-\infty}^{\infty}\phi\left(  x\right)  \,\mathrm{d}x\,.
\end{align*}
Hence $g_{\mathrm{e}}\left(  0\right)  =\gamma,$ so that the symmetric
distributional value of $f$ at $x_{0}$ equals $\gamma.$\smallskip
\end{proof}

We can now give an equivalence to the existence of the point value $f\left(
x_{0}\right)  =\gamma$ $(\mathfrak{F}_{\mathrm{sy}}),$ where $\mathfrak{F}%
_{\mathrm{sy}}$ is the family of standard delta sequences generated by a
positive normalized even test function of $\mathcal{D}\left(  \mathbb{R}%
\right)  .$\smallskip

\begin{proposition}
\label{Prop. DD 3}Let $f\in\mathcal{D}^{\prime}\left(  \mathbb{R}\right)  .$
Then the following are equivalent:

1. If $\mathfrak{F}_{\mathrm{sy}}\ $is the family of standard delta sequences
generated by a positive normalized even test function then
\begin{equation}
f\left(  x_{0}\right)  =\gamma\ \ \ \left(  \mathfrak{F}_{\mathrm{sy}}\right)
\,. \label{Sy 6}%
\end{equation}

2. The symmetric distributional point value of $f$ exists at $x_{0}$ and
equals $\gamma.$
\end{proposition}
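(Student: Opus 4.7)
The plan is to mirror the strategy of Proposition \ref{Prop. DS 2}: use Proposition \ref{Prop. DD 1} to upgrade the sequential hypothesis to a continuous-variable limit, then apply the symmetric half of Lemma \ref{Lemma DD 1} to extend the resulting limit from positive normalized even test functions to all even test functions, and finally invoke Lemma \ref{Lemma DD 1a} to identify this limit with the symmetric distributional point value.

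For the direction $(2)\Rightarrow(1)$ I would observe that any sequence in $\mathfrak{F}_{\mathrm{sy}}$ has the form $\phi_{n}(x)=n\phi(nx)$ with $\phi$ a positive normalized even test function, so every $\phi_{n}$ is even. A change of variable gives
\[
\langle f(x_{0}+x),\phi_{n}(x)\rangle=\langle f(x_{0}+(1/n)y),\phi(y)\rangle,
\]
and Lemma \ref{Lemma DD 1a} applied with $\varepsilon=1/n$ produces the limit $\gamma\int\phi=\gamma$, as required.

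For the harder direction $(1)\Rightarrow(2)$, fix a positive normalized even test function $\phi$ and, following the proof of Proposition \ref{Prop. DS 2}, define
\[
\Phi(a)=\langle f(x_{0}+x),a\phi(ax)\rangle,\qquad a>0.
\]
For every fixed $a>0$ the function $\psi(x)=a\phi(ax)$ is again positive, normalized, and even, so $\{n\psi(nx)\}_{n}\in\mathfrak{F}_{\mathrm{sy}}$; hypothesis $(1)$ therefore gives $\lim_{n\to\infty}\Phi(na)=\gamma$ for every $a>0$, with the same constant $\gamma$ throughout. Since $\Phi$ is continuous, Proposition \ref{Prop. DD 1} upgrades this to $\lim_{a\to\infty}\Phi(a)=\gamma$, which is precisely
\[
\lim_{\varepsilon\to0^{+}}\langle f(x_{0}+\varepsilon x),\phi(x)\rangle=\gamma
\]
for every positive normalized even test function $\phi$. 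By the symmetric part of Lemma \ref{Lemma DD 1}, an arbitrary even test function $\phi_{\mathrm{e}}$ can be written as $c_{1}\psi_{1}-c_{2}\psi_{2}$ with $\psi_{1},\psi_{2}$ positive normalized and even, so linearity propagates the limit to
\[
\lim_{\varepsilon\to0^{+}}\langle f(x_{0}+\varepsilon x),\phi_{\mathrm{e}}(x)\rangle=\gamma\int_{-\infty}^{\infty}\phi_{\mathrm{e}}(x)\,\mathrm{d}x
\]
for every even $\phi_{\mathrm{e}}$, and Lemma \ref{Lemma DD 1a} then delivers the symmetric distributional point value $f(x_{0})=\gamma$.

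The only genuinely delicate point is verifying that the constant $L$ produced by Proposition \ref{Prop. DD 1} is the same $\gamma$ for every choice of generating $\phi$, but this is built into the definition of $f(x_{0})=\gamma\ (\mathfrak{F}_{\mathrm{sy}})$, which demands a common limit across all sequences in $\mathfrak{F}_{\mathrm{sy}}$; once this is recognized, the remainder of the argument is straightforward bookkeeping and the reuse of machinery already developed in Sections \ref{Section: The continuous case} and \ref{Section: Several lemmas}.
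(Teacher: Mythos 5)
Your argument is correct and follows essentially the same route as the paper's proof: Proposition \ref{Prop. DD 1} upgrades the sequential hypothesis to the continuous limit for positive normalized even test functions, Lemma \ref{Lemma DD 1} extends it to all even test functions, and Lemma \ref{Lemma DD 1a} identifies the result with the symmetric distributional point value. You merely spell out the auxiliary function $\Phi(a)=\langle f(x_{0}+x),a\phi(ax)\rangle$ explicitly (borrowed from the proof of Proposition \ref{Prop. DS 2}) where the paper leaves that step implicit, and you supply the easy converse direction, which the paper absorbs into its chain of equivalences.
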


\begin{proof}
Indeed, if (\ref{Sy 6}) holds then
\begin{equation}
\lim_{n\rightarrow\infty}\left\langle f\left(  x_{0}+x\right)  ,\phi
_{n}\left(  x\right)  \right\rangle =\gamma\,, \label{Sy 7}%
\end{equation}
for all standard delta sequences $\left\{  \phi_{n}\right\}  _{n=1}^{\infty}$
generated by a positive normalized \emph{even} test function $\phi
_{\mathrm{e}},$ and use of Proposition \ref{Prop. DD 1} yields that%
\begin{equation}
\lim_{\varepsilon\rightarrow0}\left\langle f\left(  x_{0}+\varepsilon
x\right)  ,\phi_{\mathrm{e}}\left(  x\right)  \right\rangle =\gamma\,,
\label{DD 13}%
\end{equation}
for such normalized even test functions $\phi_{\mathrm{e}}.$ This last
statement is equivalent to the fact that (\ref{Sy 5}) holds for \emph{all}
even test functions because of Lemma \ref{Lemma DD 1}, and Lemma
\ref{Lemma DD 1a}\ yields that in turn this is equivalent to the symmetric
distributional point value being equal to $\gamma.$\smallskip
\end{proof}

Actually, using the same ideas as in the proof of this Proposition we see that
the limit $\lim_{n\rightarrow\infty}\left\langle f\left(  x_{0}+x\right)
,\phi_{n}\left(  x\right)  \right\rangle =\gamma_{\left\{  \phi_{n}\right\}
}$\ exists for all standard delta sequences $\left\{  \phi_{n}\right\}
_{n=1}^{\infty}$ generated by a positive normalized even test function
$\phi_{\mathrm{e}}$\ if and only if this limit is a constant $\gamma$ and
(\ref{Sy 6}) is satisfied.

\subsection{The family of standard delta sequences generated by a radial
positive normalized test
function\label{Subsection:The family of standard delta sequences generated by a radial positive normalized test function}%
}

We now consider the family $\mathfrak{F}_{\mathrm{rad}}$\ of standard
sequences generated by a radial positive normalized test function.

Let us start with some notation. We denote $r=\left\vert \mathbf{x}\right\vert
$ the radial variable in $\mathbb{R}^{d}.$ A test function $\phi\in
\mathcal{D}\left(  \mathbb{R}^{d}\right)  $ is called \emph{radial} if it is a
function of $r,$ $\phi\left(  \mathbf{x}\right)  =\varphi\left(  r\right)  ,$
for some even function $\varphi\in\mathcal{D}\left(  \mathbb{R}\right)  ;$ the
space of all radial test functions of $\mathcal{D}\left(  \mathbb{R}%
^{d}\right)  $ is denoted as $\mathcal{D}_{\mathrm{rad}}\left(  \mathbb{R}%
^{d}\right)  .$ Similarly, we denote as $\mathcal{D}_{\mathrm{rad}}^{\prime
}\left(  \mathbb{R}^{d}\right)  $ the space of all radial distributions; a
distribution $f\in\mathcal{D}^{\prime}\left(  \mathbb{R}^{d}\right)  $ is
radial if $f\left(  T\mathbf{x}\right)  =f\left(  \mathbf{x}\right)  $ for any
orthogonal transformation of $\mathbb{R}^{d},$ and this actually means
\cite{EstradaRadial, Grafakos-Teschl} that $f\left(  \mathbf{x}\right)
=f_{1}\left(  r\right)  $ for some distribution of one variable $f_{1}.$
Notice, however, that while $\varphi$ is uniquely determined by $\phi,$ for a
given $f$ there are several possible distributions $f_{1}.$

When $d=1$ then $\mathcal{D}_{\mathrm{rad}}\left(  \mathbb{R}\right)  $ and
$\mathcal{D}_{\mathrm{rad}}^{\prime}\left(  \mathbb{R}\right)  $ become the
spaces of even test functions and distributions, respectively, and are also
denoted as $\mathcal{D}_{\mathrm{even}}\left(  \mathbb{R}\right)  $ and
$\mathcal{D}_{\mathrm{even}}^{\prime}\left(  \mathbb{R}\right)  .$ This was
the situation considered in the previous subsection.

Observe that the space $\mathcal{D}_{\mathrm{rad}}^{\prime}\left(
\mathbb{R}^{d}\right)  $ is naturally isomorphic to the dual space $\left(
\mathcal{D}_{\mathrm{rad}}\left(  \mathbb{R}^{d}\right)  \right)  ^{\prime},$
that is to say, if the action of a radial distribution is known in all radial
test functions, then it can be obtained for arbitrary test functions. Indeed,
if $f\in\mathcal{D}_{\mathrm{rad}}^{\prime}\left(  \mathbb{R}^{d}\right)  $
and $\phi\in\mathcal{D}\left(  \mathbb{R}^{d}\right)  ,$ then%
\begin{equation}
\left\langle f\left(  \mathbf{x}\right)  ,\phi\left(  \mathbf{x}\right)
\right\rangle =\left\langle f\left(  \mathbf{x}\right)  ,\widetilde{\phi
}\left(  \mathbf{x}\right)  \right\rangle \,, \label{2.1}%
\end{equation}
where $\widetilde{\phi}\in\mathcal{D}_{\mathrm{rad}}\left(  \mathbb{R}\right)
$ is given as%
\begin{equation}
\widetilde{\phi}\left(  \mathbf{x}\right)  =\phi^{o}\left(  \left\vert
\mathbf{x}\right\vert \right)  \,, \label{2.2}%
\end{equation}
$\phi^{o}\in\mathcal{D}_{\mathrm{even}}\left(  \mathbb{R}\right)  $ being
defined as%
\begin{equation}
\phi^{o}\left(  r\right)  =\frac{1}{\omega}\int_{\mathbb{S}}\phi\left(
r\mathbf{\theta}\right)  \,\mathrm{d}\sigma\left(  \mathbf{\theta}\right)  \,.
\label{2.3}%
\end{equation}
Here we denote by $\mathbb{S}$ the unit sphere of $\mathbb{R}^{d},$
$\mathrm{d}\sigma$ is the Lebesgue measure in $\mathbb{S}$ and $\omega
=2\pi^{d/2}/\Gamma\left(  d/2\right)  $ is the surface area of the sphere.

Equations (\ref{2.2}) and (\ref{2.3})\ define the \emph{radial component of a
test function.} We can also define the radial component of a distribution $f,$
$\widetilde{f}\in\mathcal{D}_{\mathrm{rad}}^{\prime}\left(  \mathbb{R}%
^{d}\right)  ,$ as%
\begin{equation}
\left\langle \widetilde{f}\left(  \mathbf{x}\right)  ,\phi\left(
\mathbf{x}\right)  \right\rangle =\left\langle f\left(  \mathbf{x}\right)
,\widetilde{\phi}\left(  \mathbf{x}\right)  \right\rangle \,.\label{2.4}%
\end{equation}
The distributional analog of (\ref{2.3}) is not well defined, however
\cite{EstradaRadial, Grafakos-Teschl}.

We say that a distribution $f$ has a \emph{radial distributional point value}
at $\mathbf{x}_{0}$ equal to $\gamma$ if%
\begin{equation}
\widetilde{g}\left(  \mathbf{0}\right)  =\gamma\,, \label{2.5}%
\end{equation}
where $\widetilde{g}$ is the radial component of $g\left(  \mathbf{x}\right)
=f\left(  \mathbf{x}_{0}\mathbf{+x}\right)  .$ Similar to Lemma
\ref{Lemma DD 1a}, we have the following characterization.\smallskip

\begin{lemma}
\label{Lemma Rad 1}A distribution $f\in\mathcal{D}^{\prime}\left(
\mathbb{R}^{d}\right)  $ has a radial distributional value $\gamma$ at
$\mathbf{x}_{0}$ if and only if
\begin{equation}
\lim_{\varepsilon\rightarrow0}\left\langle f\left(  \mathbf{x}_{0}%
+\varepsilon\mathbf{x}\right)  ,\phi_{\mathrm{rad}}\left(  \mathbf{x}\right)
\right\rangle =\gamma\int_{\mathbb{R}^{d}}\phi_{\mathrm{rad}}\left(
\mathbf{x}\right)  \,\mathrm{d}\mathbf{x\,}, \label{2.6}%
\end{equation}
for all radial test functions $\phi_{\mathrm{rad}}.$
\end{lemma}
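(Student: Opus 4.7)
The plan is to unwind the definition of the radial distributional point value directly using the radial-component machinery from (\ref{2.2})--(\ref{2.4}), and show that testing against arbitrary test functions reduces to testing against radial test functions after replacing the test function by its radial component.

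The key observation I would establish first is the identity
\begin{equation*}
\langle \widetilde{g}(\varepsilon \mathbf{x}), \phi(\mathbf{x}) \rangle
= \langle g(\varepsilon \mathbf{x}), \widetilde{\phi}(\mathbf{x}) \rangle,
\qquad \phi \in \mathcal{D}(\mathbb{R}^d),\ \varepsilon>0.
\end{equation*}
This follows from (\ref{2.4}) together with the fact that the operation $\phi \mapsto \widetilde{\phi}$ commutes with dilations, i.e.\ $\widetilde{\phi(\cdot/\varepsilon)}(\mathbf{y}) = \widetilde{\phi}(\mathbf{y}/\varepsilon)$, which is immediate from (\ref{2.3}). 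Alongside this I would record two elementary facts: first, if $\phi_{\mathrm{rad}}$ is a radial test function then $\widetilde{\phi_{\mathrm{rad}}} = \phi_{\mathrm{rad}}$ (because the spherical average of a radial function is itself); second, $\int_{\mathbb{R}^d} \widetilde{\phi}(\mathbf{x})\,\mathrm{d}\mathbf{x} = \int_{\mathbb{R}^d} \phi(\mathbf{x})\,\mathrm{d}\mathbf{x}$ for every $\phi\in\mathcal{D}(\mathbb{R}^d)$, obtained by integrating in polar coordinates and using the definition (\ref{2.3}) of $\phi^o$.

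Assuming the radial distributional point value $\gamma$ exists, i.e.\ $\widetilde{g}(\mathbf{0}) = \gamma$ where $g(\mathbf{x}) = f(\mathbf{x}_0 + \mathbf{x})$, the Łojasiewicz definition gives $\lim_{\varepsilon\to 0}\langle \widetilde{g}(\varepsilon \mathbf{x}), \phi(\mathbf{x})\rangle = \gamma \int \phi$ for every $\phi\in\mathcal{D}(\mathbb{R}^d)$. Applying the identity above and specializing to a radial test function $\phi_{\mathrm{rad}}$ (so $\widetilde{\phi_{\mathrm{rad}}} = \phi_{\mathrm{rad}}$) yields (\ref{2.6}) after recognizing that $g(\varepsilon\mathbf{x}) = f(\mathbf{x}_0 + \varepsilon\mathbf{x})$. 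Conversely, suppose (\ref{2.6}) holds for every radial test function. Given an arbitrary $\phi\in\mathcal{D}(\mathbb{R}^d)$, the function $\widetilde{\phi}$ is itself a radial test function, so applying (\ref{2.6}) with $\phi_{\mathrm{rad}} = \widetilde{\phi}$ gives $\lim_{\varepsilon\to 0}\langle f(\mathbf{x}_0 + \varepsilon \mathbf{x}), \widetilde{\phi}(\mathbf{x})\rangle = \gamma \int \widetilde{\phi} = \gamma \int \phi$; by the identity this is exactly $\lim_{\varepsilon\to 0}\langle \widetilde{g}(\varepsilon\mathbf{x}), \phi(\mathbf{x})\rangle = \gamma \int \phi$, which is the defining condition $\widetilde{g}(\mathbf{0}) = \gamma$.

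There is no serious obstacle here: the argument is essentially a careful bookkeeping of definitions. The one mildly technical point worth checking is that $\widetilde{\phi} \in \mathcal{D}_{\mathrm{rad}}(\mathbb{R}^d)$ whenever $\phi \in \mathcal{D}(\mathbb{R}^d)$, i.e.\ that $\phi^o$ is smooth and even on $\mathbb{R}$ and that $\widetilde{\phi}$ is compactly supported; this is standard but the reader can be referred to \cite{EstradaRadial, Grafakos-Teschl} as elsewhere in the section. Once this and the dilation-equivariance of $\phi \mapsto \widetilde{\phi}$ are in hand, the equivalence is one line in each direction.
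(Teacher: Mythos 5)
Your proof is correct and follows essentially the same route as the paper's: both directions reduce to the definition of the radial component, using that $\widetilde{\phi_{\mathrm{rad}}}=\phi_{\mathrm{rad}}$, that $\int\widetilde{\phi}=\int\phi$, and that forming the radial component commutes with dilations (a step the paper leaves implicit but you rightly make explicit). No substantive differences.
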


\begin{proof}
If $\widetilde{g}\left(  \mathbf{0}\right)  =\gamma,$ then%
\begin{align*}
\lim_{\varepsilon\rightarrow0}\left\langle f\left(  \mathbf{x}_{0}%
+\varepsilon\mathbf{x}\right)  ,\phi_{\mathrm{rad}}\left(  \mathbf{x}\right)
\right\rangle  &  =\lim_{\varepsilon\rightarrow0}\left\langle \widetilde
{g}\left(  \varepsilon\mathbf{x}\right)  ,\phi_{\mathrm{rad}}\left(
\mathbf{x}\right)  \right\rangle \\
&  =\gamma\int_{\mathbb{R}^{d}}\phi_{\mathrm{rad}}\left(  \mathbf{x}\right)
\,\mathrm{d}\mathbf{x}\,.
\end{align*}

On the other hand, if (\ref{2.6}) holds, then for any test function $\phi,$
\begin{align*}
\lim_{\varepsilon\rightarrow0}\left\langle \widetilde{g}\left(  \varepsilon
\mathbf{x}\right)  ,\phi\left(  \mathbf{x}\right)  \right\rangle  &
=\lim_{\varepsilon\rightarrow0}\left\langle \widetilde{g}\left(
\varepsilon\mathbf{x}\right)  ,\widetilde{\phi}\left(  \mathbf{x}\right)
\right\rangle \\
&  =\lim_{\varepsilon\rightarrow0}\left\langle f\left(  \mathbf{x}%
_{0}+\varepsilon\mathbf{x}\right)  ,\widetilde{\phi}\left(  \mathbf{x}\right)
\right\rangle \\
&  =\gamma\int_{\mathbb{R}^{d}}\widetilde{\phi}\left(  \mathbf{x}\right)
\,\mathrm{d}\mathbf{x}\\
&  =\gamma\int_{\mathbb{R}^{d}}\phi\left(  \mathbf{x}\right)  \,\mathrm{d}%
\mathbf{x}\,.
\end{align*}
Hence $\widetilde{g}\left(  \mathbf{0}\right)  =\gamma,$ that is, the radial
distributional value of $f$ at $\mathbf{x}_{0}$ equals $\gamma.$\smallskip
\end{proof}

Therefore, we obtain the ensuing equivalence for the fact that $f\left(
\mathbf{x}_{0}\right)  =\gamma$ \ $(\mathfrak{F}_{\mathrm{rad}}).$\smallskip

\begin{proposition}
\label{Prop. Rad 1}Let $f\in\mathcal{D}^{\prime}\left(  \mathbb{R}^{d}\right)
.$ Then the following are equivalent:

1. If $\mathfrak{F}_{\mathrm{rad}}\ $is the family of standard delta sequences
generated by a positive normalized radial test function then
\begin{equation}
f\left(  \mathbf{x}_{0}\right)  =\gamma\ \ \ \left(  \mathfrak{F}%
_{\mathrm{rad}}\right)  \,. \label{2.7}%
\end{equation}

2. The radial distributional point value of $f$ exists at $\mathbf{x}_{0}$ and
equals $\gamma.$
\end{proposition}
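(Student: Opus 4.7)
The plan is to parallel the proof of Proposition~\ref{Prop. DD 3} mutatis mutandis, replacing even by radial and using Lemma~\ref{Lemma Rad 1} in place of Lemma~\ref{Lemma DD 1a}. Both directions reduce to moving between the discrete family $\mathfrak{F}_{\mathrm{rad}}$ and the continuous distributional limit tested against radial test functions, with Proposition~\ref{Prop. DD 1} providing the passage from sequences to a continuous variable.

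For the implication (1) $\Rightarrow$ (2), I would fix a positive normalized radial test function $\phi_{\mathrm{rad}}$ and, mirroring Proposition~\ref{Prop. DS 2}, define
$$\Phi(a) = \langle f(\mathbf{x}_0 + \mathbf{x}), a^d \phi_{\mathrm{rad}}(a\mathbf{x}) \rangle, \qquad a > 0.$$
Since $a^d\phi_{\mathrm{rad}}(a\cdot)$ is again a positive normalized radial test function, the sequence $\{(na)^d \phi_{\mathrm{rad}}(na\cdot)\}_{n=1}^{\infty}$ lies in $\mathfrak{F}_{\mathrm{rad}}$ for every $a > 0$, so hypothesis (1) gives $\lim_{n\to\infty} \Phi(na) = \gamma$ for each $a > 0$. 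Because $\Phi$ is continuous on $(0,\infty)$, Proposition~\ref{Prop. DD 1} promotes this to $\lim_{\lambda\to\infty} \Phi(\lambda) = \gamma$, i.e.
$$\lim_{\varepsilon\to 0^+} \langle f(\mathbf{x}_0 + \varepsilon \mathbf{x}), \phi_{\mathrm{rad}}(\mathbf{x}) \rangle = \gamma$$
for every positive normalized radial $\phi_{\mathrm{rad}}$. To extend this to all radial test functions I would invoke the radial analog of Lemma~\ref{Lemma DD 1}: every real $\phi \in \mathcal{D}_{\mathrm{rad}}(\mathbb{R}^d)$ can be written as $c_1\psi_1 - c_2\psi_2$ with $\psi_1,\psi_2$ positive normalized radial test functions, proved by the very same construction as in Lemma~\ref{Lemma DD 1} but requiring the dominating function $\zeta_1 \geq \max\{\phi, 0\}$ to be chosen radial, which is possible since $\phi$ is radial. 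Lemma~\ref{Lemma Rad 1} then identifies the resulting limit with the statement that the radial distributional point value of $f$ at $\mathbf{x}_0$ equals $\gamma$.

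The converse (2) $\Rightarrow$ (1) is immediate: Lemma~\ref{Lemma Rad 1} supplies
$$\lim_{\varepsilon\to 0}\langle f(\mathbf{x}_0 + \varepsilon\mathbf{x}), \phi_{\mathrm{rad}}(\mathbf{x})\rangle = \gamma \int_{\mathbb{R}^d} \phi_{\mathrm{rad}}(\mathbf{x})\,\mathrm{d}\mathbf{x}$$
for every radial test function, and specializing to a positive normalized radial generator with $\varepsilon = 1/n$ yields (\ref{2.7}). The principal conceptual step is the forward direction's appeal to Proposition~\ref{Prop. DD 1}; the only routine item to verify is the radial version of Lemma~\ref{Lemma DD 1}, which goes through unchanged by keeping the auxiliary majorant radial.
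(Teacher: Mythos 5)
Your proposal is correct and follows essentially the same route as the paper: pass from the sequences in $\mathfrak{F}_{\mathrm{rad}}$ to the continuous limit via Proposition \ref{Prop. DD 1}, extend from positive normalized radial test functions to all radial test functions by the decomposition of Lemma \ref{Lemma DD 1}, and conclude with Lemma \ref{Lemma Rad 1}. Your explicit remark that the radial version of Lemma \ref{Lemma DD 1} requires choosing the majorant $\zeta_{1}$ radial is a small detail the paper leaves implicit, but it does not change the argument.
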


\begin{proof}
Indeed, if (\ref{2.7}) holds then
\begin{equation}
\lim_{n\rightarrow\infty}\left\langle f\left(  \mathbf{x}_{0}+\varepsilon
\mathbf{x}\right)  ,\phi_{n}\left(  \mathbf{x}\right)  \right\rangle
=\gamma\,, \label{2.8}%
\end{equation}
for all standard delta sequences $\left\{  \phi_{n}\right\}  _{n=1}^{\infty}$
generated by a positive normalized \emph{radial} test function $\phi
_{\mathrm{rad}}.$ Use of Proposition \ref{Prop. DD 1} yields that%
\begin{equation}
\lim_{\varepsilon\rightarrow0}\left\langle f\left(  \mathbf{x}_{0}%
+\varepsilon\mathbf{x}\right)  ,\phi_{\mathrm{rad}}\left(  \mathbf{x}\right)
\right\rangle =\gamma\,, \label{2.9}%
\end{equation}
for such normalized radial test functions $\phi_{\mathrm{rad}}.$ This last
statement is equivalent to the fact that (\ref{Sy 5}) holds for \emph{all}
radial test functions because of Lemma \ref{Lemma DD 1}, and Lemma
\ref{Lemma Rad 1}\ yields that, in turn, this is equivalent to the radial
distributional point value being equal to $\gamma.$\smallskip
\end{proof}

We also have the next result, that is obtained from Lemma \ref{Lemma DD 1}%
.\smallskip

\begin{proposition}
The limit $\lim_{n\rightarrow\infty}\left\langle f\left(  \mathbf{x}%
_{0}+\varepsilon\mathbf{x}\right)  ,\phi_{n}\left(  \mathbf{x}\right)
\right\rangle =\gamma_{\left\{  \phi_{n}\right\}  }$\ exists for all standard
delta sequences $\left\{  \phi_{n}\right\}  _{n=1}^{\infty}$ generated by a
positive normalized radial test function $\phi_{\mathrm{rad}}$\ if and only if
this limit is a constant $\gamma$ and $f\left(  \mathbf{x}_{0}\right)
=\gamma\ \ \left(  \mathfrak{F}_{\mathrm{rad}}\right)  .$
\end{proposition}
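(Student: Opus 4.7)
The reverse implication is immediate from the definition of $f(\mathbf{x}_{0})=\gamma\ (\mathfrak{F}_{\mathrm{rad}})$, so the real content is the forward direction: assuming $\gamma_{\{\phi_{n}\}}$ exists for every $\{\phi_{n}\}\in\mathfrak{F}_{\mathrm{rad}}$, one must show this number is independent of the generating test function. The plan is to reuse the scaling argument of Proposition \ref{Prop. DS 2}. Fix a positive normalized radial test function $\phi_{\mathrm{rad}}$ and set
\[
\Phi(\lambda)=\bigl\langle f(\mathbf{x}_{0}+\mathbf{x}),\lambda^{d}\phi_{\mathrm{rad}}(\lambda\mathbf{x})\bigr\rangle,\qquad\lambda>0,
\]
which is continuous in $\lambda$. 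For every $a>0$, the function $a^{d}\phi_{\mathrm{rad}}(a\,\cdot)$ is again positive, normalized and radial, hence $\{n^{d}a^{d}\phi_{\mathrm{rad}}(na\mathbf{x})\}_{n\geq 1}\in\mathfrak{F}_{\mathrm{rad}}$ and the hypothesis gives that $\lim_{n\to\infty}\Phi(na)$ exists for each $a>0$. Proposition \ref{Prop. DD 1} then forces this limit to be independent of $a$, so
\[
\lim_{\varepsilon\to 0^{+}}\bigl\langle f(\mathbf{x}_{0}+\varepsilon\mathbf{x}),\phi_{\mathrm{rad}}(\mathbf{x})\bigr\rangle=\gamma_{0}(\phi_{\mathrm{rad}})
\]
exists for every positive normalized radial $\phi_{\mathrm{rad}}$.

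Next I would extend $\gamma_{0}$ by linearity to every $\phi\in\mathcal{D}_{\mathrm{rad}}(\mathbb{R}^{d})$. For this I need the radial analogue of Lemma \ref{Lemma DD 1}, which is proved exactly as there: given a real radial $\phi$, take a radial positive majorant $\zeta_{1}\geq\max\{\phi,0\}$ (obtained, e.g., by radially symmetrizing any positive majorant), set $\zeta_{2}=\zeta_{1}-\phi$, and write $\zeta_{j}=c_{j}\psi_{j}$ with $\psi_{j}$ positive normalized radial. The resulting $\gamma_{0}$ is then a well-defined radial distribution on $\mathbb{R}^{d}$, and the change of variable $\mathbf{x}\mapsto t\mathbf{x}$ inside the defining limit shows $\gamma_{0}(t\mathbf{x})=\gamma_{0}(\mathbf{x})$ for all $t>0$, i.e.\ $\gamma_{0}$ is homogeneous of degree $0$.

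Finally, by \cite[Thm.\ 2.6.2]{EK2002} every degree-zero homogeneous distribution on $\mathbb{R}^{d}$ is parametrized by a spherical datum $\alpha\in\mathcal{D}'(\mathbb{S})$; the radiality of $\gamma_{0}$ forces $\alpha$ to be $SO(d)$-invariant, hence a constant $\gamma$, and consequently $\langle\gamma_{0},\phi\rangle=\gamma\int_{\mathbb{R}^{d}}\phi(\mathbf{x})\,\mathrm{d}\mathbf{x}$ for every radial test function $\phi$. In particular $\gamma_{0}(\phi_{\mathrm{rad}})=\gamma$ for every positive normalized radial $\phi_{\mathrm{rad}}$, so $\gamma_{\{\phi_{n}\}}=\gamma$ for every $\{\phi_{n}\}\in\mathfrak{F}_{\mathrm{rad}}$, which is precisely $f(\mathbf{x}_{0})=\gamma\ (\mathfrak{F}_{\mathrm{rad}})$. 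The step I expect to be least automatic is the passage from ``$\gamma_{0}$ is radial and homogeneous of degree $0$'' to ``$\gamma_{0}$ is a constant distribution'', since this is where radiality is essentially used (to collapse the spherical datum to a scalar); the remaining pieces are faithful copies of arguments already made in Propositions \ref{Prop. DS 2}, \ref{Prop. DD 3} and \ref{Prop. Rad 1}.
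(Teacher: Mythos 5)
Your proof is correct and follows essentially the route the paper intends: the paper gives no written proof for this proposition, merely noting that it is obtained from Lemma \ref{Lemma DD 1} by the same ideas as Propositions \ref{Prop. DS 2}, \ref{Prop. DD 3} and \ref{Prop. Rad 1}, and your argument (scaling plus Proposition \ref{Prop. DD 1}, the radial decomposition analogue of Lemma \ref{Lemma DD 1}, and the collapse of a radial homogeneous degree-zero distribution to a constant) is exactly that outline written out in full. No discrepancies to report.
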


\subsection{The family of all positive normalized test
functions\label{Subsection: The family of all positive normalized test functions}%
}

We saw in Subsection
\ref{Subsection:Standard delta sequences generated by an even positive normalized test function}%
\ that Sasane's notion of point values was not equivalent to the standard
definition, nor, in the next subsection, is the notion based on the family
$\mathfrak{F}_{\mathrm{rad}}$\ of standard delta sequences generated by a
positive normalized radial test function. Nevertheless, for the family
$\mathfrak{F}$\ of standard delta sequences generated by a positive normalized
test function the point value definition is in fact equivalent to the standard
\L ojasiewicz\ definition. Of course, Sasane was considering the family of
standard delta sequences generated by an \emph{even }\ positive normalized
test function $\mathfrak{F}_{\mathrm{sy}}.$ Both $\mathfrak{F}_{\mathrm{sy}}%
$\ and $\mathfrak{F}_{\mathrm{rad}}$ are subfamilies of $\mathfrak{F}.$ We can
also consider families larger than $\mathfrak{F}.$ For instance, we can
consider the family $\mathfrak{F}_{\mathrm{all}}$ of \emph{all} delta
sequences formed with positive normalized test functions. In this next
example, we will see that the distributional point value $f\left(
\mathbf{x}_{0}\right)  =\gamma$ is not equivalent to $f\left(  \mathbf{x}%
_{0}\right)  =\gamma\ \ \left(  \mathfrak{F}_{\mathrm{all}}\right)  .$ Later
on we shall find an equivalent formulation to $f\left(  \mathbf{x}_{0}\right)
=\gamma\ \ \left(  \mathfrak{F}_{\mathrm{all}}\right)  .$\smallskip

\begin{example}
\label{Example 3}Let $f$ be the regular distribution given by $f(x)=\sin
(1/x)$. Then $f(0)=0$ distributionally \cite{Lojasiewics}. Let $a_{n}$ be a
positive sequence with $a_{n}\rightarrow0$ and $f(a_{n})=C>0.$ For instance,
we could take $a_{n}=1/(2\pi n+\pi/6).$ For a fixed $n,$ let $\left\{
\psi_{n,m}\right\}  _{m=1}^{\infty}$ be a sequence of positive test functions
such that $\psi_{n,m}\rightarrow\delta(x-a_{n})$ as $m\rightarrow\infty.$ Then
as $n\rightarrow\infty$, we obtain a sequence $\delta_{n}(x)=\delta(x-a_{n})$
that converges to $\delta(x).$ For each $n,$ let $m_{n}$ be large enough so
that
\[
\left\vert \int_{B_{1/n}\left(  a_{n}\right)  }f\left(  x\right)  \psi
_{n,m}\left(  x\right)  \,\mathrm{d}x-f\left(  a_{n}\right)  \right\vert <C/2
\]
and $\operatorname*{supp}\psi_{n,m}\subset B_{1/n}\left(  a_{n}\right)  $ for
$m\geq m_{n}$. Then we can define the sequence $\phi_{n}\left(  x\right)
=\psi_{n,m_{n}}\left(  x\right)  $. By Lemma \ref{Lemma DS 1}, this is a delta
sequence and we have $\langle f(x),\phi_{n}\left(  x\right)  \rangle>C/2$ for
all $n$ and so $\lim\limits_{n\rightarrow\infty}\langle f(x),\phi_{n}\left(
x\right)  \rangle$ cannot be equal to $0.$\smallskip
\end{example}

The next lemma will be useful momentarily.\smallskip

\begin{lemma}
\label{last lemma} If $\left\{  \phi_{n}\right\}  _{n=1}^{\infty}$ is a delta
sequence of positive test functions then
\begin{equation}
\lim_{n\rightarrow\infty}\left\Vert \phi_{n}\right\Vert _{L^{1}\left(
B\setminus U\right)  }=0\,, \label{a}%
\end{equation}
where $B$ and $U$ are both neighborhoods of the origin.
\end{lemma}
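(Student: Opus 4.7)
The plan is to dominate the characteristic function of $B\setminus U$ pointwise by a smooth test function $\eta$ that vanishes at the origin, and then read off the conclusion from the delta sequence property. Positivity of the $\phi_n$ is what makes this domination argument go through.

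First I would set up the geometry. Since $U$ is a neighborhood of $\mathbf{0}$, there exists $r>0$ with $\{\mathbf{x}:|\mathbf{x}|<r\}\subset U$, so $\overline{B\setminus U}\subset\{\mathbf{x}:|\mathbf{x}|\ge r\}$ is bounded away from the origin. Taking $B$ to be a bounded neighborhood, so that $\overline{B\setminus U}$ is compact, a standard smooth partition of unity / mollified cutoff construction produces a function $\eta\in\mathcal{D}(\mathbb{R}^d)$ with $0\le\eta\le 1$, $\eta\equiv 1$ on $\overline{B\setminus U}$, and $\operatorname{supp}\eta\subset\{\mathbf{x}:|\mathbf{x}|>r/2\}$; in particular $\eta(\mathbf{0})=0$.

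Combining positivity of $\phi_n$ with the pointwise inequality $\chi_{B\setminus U}\le\eta$ gives
\begin{equation*}
\|\phi_n\|_{L^1(B\setminus U)}=\int_{B\setminus U}\phi_n(\mathbf{x})\,\mathrm{d}\mathbf{x}\le\int_{\mathbb{R}^d}\phi_n(\mathbf{x})\eta(\mathbf{x})\,\mathrm{d}\mathbf{x}=\langle\phi_n,\eta\rangle.
\end{equation*}
Since $\{\phi_n\}_{n=1}^{\infty}$ is a delta sequence and $\eta\in\mathcal{D}(\mathbb{R}^d)$, the right-hand side tends to $\eta(\mathbf{0})=0$, and the squeeze principle delivers (\ref{a}).

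The argument is short and the only real work is the construction of $\eta$, which is routine. The genuine subtlety, and really the only obstacle to flag, is that the proof needs $\overline{B\setminus U}$ to be compact so that $\eta$ can be chosen in $\mathcal{D}(\mathbb{R}^d)$; this is automatic when $B$ is bounded, which seems to be the intended meaning of ``neighborhood of the origin'' in context. Without some boundedness hypothesis on $B$, a positive delta sequence can have a small amount of mass escape to infinity while still converging distributionally to $\delta$, and the $L^1$ conclusion can fail, so some form of this assumption is essential.
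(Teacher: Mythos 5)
Your proof is correct and takes essentially the same route as the paper's: both dominate $\chi_{B\setminus U}$ by a nonnegative test function that equals $1$ on $B\setminus U$ and vanishes at the origin, then invoke positivity of $\phi_n$ and the delta-sequence property. Your remark that $B$ must be bounded (so that $\overline{B\setminus U}$ is compact and the cutoff lies in $\mathcal{D}(\mathbb{R}^d)$) is a valid observation about a hypothesis the paper leaves implicit.
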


\begin{proof}
Choose $\psi\in\mathcal{D}\left(  \mathbb{R}^{d}\right)  $ such that $\psi
\geq0,$%
\begin{equation}
\psi\left(  \mathbf{x}\right)  =1\,,\ \ \ \mathbf{x}\in B\setminus
U\,,\ \ \ \psi\left(  \mathbf{0}\right)  =0\,, \label{b}%
\end{equation}
which is possible because $\mathbf{0}\notin\overline{B\setminus U}.$ Then%
\begin{align*}
\left\Vert \phi_{n}\right\Vert _{L^{1}\left(  B\setminus U\right)  }  &
=\int_{B\setminus U}\left\vert \phi_{n}\left(  \mathbf{x}\right)  \right\vert
\,\mathrm{d}\mathbf{x}=\int_{B\setminus U}\phi_{n}\left(  \mathbf{x}\right)
\,\mathrm{d}\mathbf{x}\\
&  =\int_{B\setminus U}\psi\left(  \mathbf{x}\right)  \phi_{n}\left(
\mathbf{x}\right)  \,\mathrm{d}\mathbf{x}\\
&  \leq\int_{B}\psi\left(  \mathbf{x}\right)  \phi_{n}\left(  \mathbf{x}%
\right)  \,\mathrm{d}\mathbf{x}\rightarrow\psi\left(  \mathbf{0}\right)  =0\,,
\end{align*}
as $n\rightarrow\infty.$\smallskip
\end{proof}

We are now ready to prove the main result of this section.\smallskip

\begin{proposition}
\label{Prop LastS 1}Suppose $f\in\mathcal{D}^{\prime}\left(  \mathbb{R}%
^{d}\right)  $ and $\mathbf{x}_{0}\in\mathbb{R}^{d}.$ If
\begin{equation}
\lim\limits_{n\rightarrow\infty}\langle f\left(  \mathbf{x}_{0}+\mathbf{x}%
\right)  ,\phi_{n}\left(  \mathbf{x}\right)  \rangle=\gamma\,, \label{cond A}%
\end{equation}
for all positive delta sequences $\left\{  \phi_{n}\right\}  $, then the
following two conditions hold:

\begin{enumerate}
\item There is an $r^{\ast}>0$ such that $\left.  f\right\vert _{B_{r^{\ast}%
}\left(  \mathbf{x}_{0}\right)  }\in L^{\infty}\left(  B_{r^{\ast}}\left(
\mathbf{x}_{0}\right)  \right)  $.

\item $\lim\limits_{r\rightarrow0}\left\Vert \left.  f\right\vert
_{B_{r}\left(  \mathbf{x}_{0}\right)  }-\gamma\chi_{B_{r}\left(
\mathbf{x}_{0}\right)  }\right\Vert _{\infty}=0.$
\end{enumerate}

Here $\chi_{B_{r}\left(  \mathbf{x}_{0}\right)  }$ is the characteristic
function of the ball $B_{r}\left(  \mathbf{x}_{0}\right)  .$ Conversely, if
(1) and (2) are satisfied, then (\ref{cond A}) holds for all positive delta
sequences with support contained in $B_{r^{\ast}}\left(  \mathbf{0}\right)  .$
\end{proposition}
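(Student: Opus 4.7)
My plan is to translate by $\mathbf{x}_0$ throughout (so I may assume $\mathbf{x}_0=\mathbf{0}$) and handle the forward direction by proving (1) and (2) separately by contradiction, each time manufacturing a positive delta sequence whose pairing with $f$ misbehaves. For (1), suppose $f$ fails to be in $L^{\infty}$ on every ball $B_{1/n}(\mathbf{0})$. Then by Lemma \ref{SL 1} (applied in each such ball) there is no uniform bound on $|\langle f,\phi\rangle|$ as $\phi$ ranges over positive normalized test functions supported in $B_{1/n}(\mathbf{0})$, so I can choose such a $\phi_n$ with $|\langle f,\phi_n\rangle|>n$. Lemma \ref{Lemma DS 1} tells me $\{\phi_n\}$ is a positive delta sequence, and $\langle f(\mathbf{x}),\phi_n(\mathbf{x})\rangle\to\infty$, contradicting (\ref{cond A}). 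Hence some $r^{\ast}>0$ works.

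For (2), I use (1) to write $f-\gamma\in L^{\infty}(B_{r^{\ast}}(\mathbf{0}))$, so Lemma \ref{Lemma SL 2} applied to $f-\gamma$ on any ball $B_{r}(\mathbf{0})$, $r<r^{\ast}$, gives
\begin{equation*}
\|f-\gamma\|_{L^{\infty}(B_{r}(\mathbf{0}))}=\sup\bigl\{|\langle f-\gamma,\phi\rangle|:\phi\in\mathcal{D}(B_{r}(\mathbf{0}))\text{ positive, normalized}\bigr\}.
\end{equation*}
If (2) failed, there would be $\varepsilon_{0}>0$ and $r_{n}\downarrow 0$ with $\|f-\gamma\|_{L^{\infty}(B_{r_{n}}(\mathbf{0}))}>\varepsilon_{0}$, so I could pick positive normalized $\phi_{n}\in\mathcal{D}(B_{r_{n}}(\mathbf{0}))$ with $|\langle f-\gamma,\phi_{n}\rangle|>\varepsilon_{0}/2$. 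Since each $\phi_{n}$ is normalized, $\langle\gamma,\phi_{n}\rangle=\gamma$, hence $|\langle f,\phi_{n}\rangle-\gamma|>\varepsilon_{0}/2$; but $\{\phi_{n}\}$ is a delta sequence by Lemma \ref{Lemma DS 1}, contradicting (\ref{cond A}).

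For the converse, assume (1), (2), and let $\{\phi_{n}\}$ be any positive delta sequence with $\operatorname{supp}\phi_{n}\subset B_{r^{\ast}}(\mathbf{0})$. Fix $\psi\in\mathcal{D}(\mathbb{R}^{d})$ with $\psi\equiv 1$ on $B_{r^{\ast}}(\mathbf{0})$; then $\int\phi_{n}=\langle\phi_{n},\psi\rangle\to\psi(\mathbf{0})=1$. Given $\varepsilon>0$, use (2) to pick $r\in(0,r^{\ast})$ with $\|f-\gamma\|_{L^{\infty}(B_{r}(\mathbf{0}))}<\varepsilon$, and set $M=\|f-\gamma\|_{L^{\infty}(B_{r^{\ast}}(\mathbf{0}))}$ (finite by (1)). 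Splitting the integral,
\begin{equation*}
|\langle f,\phi_{n}\rangle-\gamma|\leq\Bigl|\!\int_{B_{r^{\ast}}(\mathbf{0})}(f-\gamma)\phi_{n}\Bigr|+|\gamma|\bigl|\textstyle\int\phi_{n}-1\bigr|\leq\varepsilon\!\int\!\phi_{n}+M\|\phi_{n}\|_{L^{1}(B_{r^{\ast}}\setminus B_{r})}+|\gamma|\bigl|\textstyle\int\phi_{n}-1\bigr|.
\end{equation*}
By Lemma \ref{last lemma} the middle term vanishes, the last tends to $0$, and the first stays close to $\varepsilon$; letting $\varepsilon\to 0$ gives $\langle f,\phi_{n}\rangle\to\gamma$.

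The main delicacy will be the passage from the distributional hypothesis to pointwise (essential-supremum) control in step (2): one must invoke Lemma \ref{Lemma SL 2} for the \emph{difference} $f-\gamma$, which is legitimate only because (1) was established first and which uses normalization of $\phi_{n}$ crucially to identify $\langle\gamma,\phi_{n}\rangle$ with $\gamma$. In the converse, the key subtlety is that a general positive delta sequence need not be normalized, so the constant $\gamma$ produces the extra error $|\gamma|\,|\!\int\phi_{n}-1|$ that must be shown to vanish using a smooth cutoff equal to $1$ on a fixed neighborhood containing all the supports.
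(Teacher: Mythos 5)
Your proof is correct and follows essentially the same route as the paper: the forward direction argues by contradiction using Lemma \ref{SL 1} (resp.\ Lemma \ref{Lemma SL 2}) to manufacture a shrinking-support positive delta sequence via Lemma \ref{Lemma DS 1}, and the converse uses the same split of the integral into a small ball where $\left\Vert f-\gamma\right\Vert _{\infty}<\varepsilon$ and an annulus controlled by Lemma \ref{last lemma}. The one point where you are more careful than the paper is worth keeping: the paper's converse tacitly assumes the delta sequence is normalized, whereas you show $\int\phi_{n}\rightarrow1$ with a cutoff and absorb the extra term $\left\vert \gamma\right\vert \left\vert \int\phi_{n}-1\right\vert$, which matches the proposition as actually stated.
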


\begin{proof}
Suppose that (\ref{cond A}) holds. Notice that (1) follows from Lemma 6.2. To
see that (2) is true, suppose instead that
\begin{equation}
\limsup_{r\rightarrow0}\left\Vert \left.  f\right\vert _{B_{r}\left(
\mathbf{x}_{0}\right)  }-\gamma\chi_{B_{r}\left(  \mathbf{x}_{0}\right)
}\right\Vert _{\infty}=C>0\,. \label{d}%
\end{equation}
Let $r_{n}$ be a decreasing sequence of positive numbers with $r_{n}<r^{\ast}$
and $r_{n}\rightarrow0.$ For each $n$, there is a positive normalized test
function supported in $B_{r_{n}}\left(  \mathbf{0}\right)  ,$ say $\phi_{n},$
such that
\begin{equation}
\left\vert \langle f\left(  \mathbf{x}_{0}+\mathbf{x}\right)  ,\phi_{n}\left(
\mathbf{x}\right)  \rangle-\gamma\right\vert >\frac{C}{2}\,. \label{e}%
\end{equation}
By Lemma \ref{Lemma DS 1}, $\left\{  \phi_{n}\right\}  _{n=1}^{\infty}$ forms
a delta sequence and so $\lim\limits_{n\rightarrow\infty}\langle f\left(
\mathbf{x}_{0}+\mathbf{x}\right)  ,\phi_{n}\left(  \mathbf{x}\right)
\rangle=\gamma,$ which contradicts (\ref{e}).

For the converse, let $\left\{  \psi_{n}\right\}  $ be a delta sequence of
positive normalized test functions supported in $B_{r^{\ast}}\left(
\mathbf{0}\right)  .$ Since by (1) $f$ is a regular distribution in
$B_{r^{\ast}}\left(  \mathbf{0}\right)  $ we have
\begin{equation}
\langle f\left(  \mathbf{x}_{0}+\mathbf{x}\right)  ,\psi_{n}\left(
\mathbf{x}\right)  \rangle-\gamma=\int_{B_{r^{\ast}}\left(  \mathbf{0}\right)
}\left(  f\left(  \mathbf{x}_{0}+\mathbf{x}\right)  -\gamma\right)  \psi
_{n}\left(  \mathbf{x}\right)  \,\mathrm{d}\mathbf{x}\,. \label{f}%
\end{equation}
Let $\varepsilon>0.$ By condition (2), we can find an open neighborhood $V$ of
the origin that is contained in $B_{r^{\ast}}\left(  \mathbf{0}\right)  $ such
that if $W=\mathbf{x}_{0}+V,$\ then $\left\Vert f-\gamma\right\Vert
_{L^{\infty}\left(  W\right)  }<\varepsilon$ and for this $V$\ we can find
$n_{0}$ such that $\left\Vert \psi_{n}\right\Vert _{L_{1}\left(  B_{r^{\ast}%
}\left(  \mathbf{0}\right)  \setminus V\right)  }<\varepsilon$ if $n\geq
n_{0}.$ If $M$ is the constant $\Vert f-\gamma\Vert_{L^{\infty}\left(
B_{r^{\ast}}\left(  \mathbf{x}_{0}\right)  \right)  },$ then we have
\begin{align*}
\left\vert \langle f\left(  \mathbf{x}_{0}+\mathbf{x}\right)  ,\psi_{n}\left(
\mathbf{x}\right)  \rangle-\gamma\right\vert  &  =\left\vert \int_{W}\left(
f\left(  \mathbf{x}\right)  -\gamma\right)  \psi_{n}\left(  \mathbf{x-x}%
_{0}\right)  \,\mathrm{d}\mathbf{x}\right. \\
&  \left.  \ \ \ \ \ \ \ \ \ +\int_{B_{r^{\ast}}\left(  \mathbf{0}\right)
\setminus V}\left(  f\left(  \mathbf{x}_{0}+\mathbf{x}\right)  -\gamma\right)
\psi_{n}\left(  \mathbf{x}\right)  \,\mathrm{d}\mathbf{x}\right\vert \\
&  \leq\left\Vert f-\gamma\right\Vert _{L^{\infty}\left(  W\right)
}\left\Vert \psi_{n}\right\Vert _{L^{1}\left(  V\right)  }\\
&  \ \ \ \ \ \ \ \ \ \ \ +\Vert f-\gamma\Vert_{L^{\infty}\left(  B_{r^{\ast}%
}\left(  \mathbf{x}_{0}\right)  \setminus W\right)  }\left\Vert \psi
_{n}\right\Vert _{L^{1}\left(  B_{r^{\ast}}\left(  \mathbf{0}\right)
\setminus V\right)  }\\
&  <\varepsilon+M\varepsilon\,,
\end{align*}
and consequently $\lim\limits_{n\rightarrow\infty}\langle f\left(
\mathbf{x}_{0}+\mathbf{x}\right)  ,\psi_{n}\left(  \mathbf{x}\right)
\rangle=\gamma.$
\end{proof}


\begin{thebibliography}{99}                                                                                               %


\bibitem {AMS}Antosik, P., Mikusi\'{n}ski, J. and Sikorski, R., \textit{Theory
of distributions. The sequential approach,} Elsevier Scientific Publishing
Co., Amsterdam; PWN---Polish Scientific Publishers, Warsaw, 1973.

\bibitem {CamposFerreira}Campos Ferreira, J., \textit{Introduction to the
Theory of Distributions, }Longman, London, 1997.

\bibitem {Estrada12}Estrada, R., The set of singularities of regulated
functions of several variables, \textit{Collectanea Mathematica }\textbf{63
}(2012), 351-359.

\bibitem {EstradaRadial}Estrada, R., On radial functions and distributions and
their Fourier transforms, \textit{J. Fourier Anal. Appls. }\textbf{20 }(2013), 301-320.

\bibitem {EK2002}Estrada, R. and Kanwal, R. P., \textit{A Distributional
Approach to Asymptotics. Theory and Applications, }Second edition,\textit{\ }%
Birkh\"{a}user, Boston, 2002.

\bibitem {Gordon}Gordon, R. A., \textit{The integrals of Lebesgue, Denjoy,
Perron, and Henstock,} Amer. Math. Soc., Providence, 1994.

\bibitem {Grafakos-Teschl}Grafakos, L. and Teschl, G., On Fourier transforms
of radial functions and distributions, \textit{J. Fourier Anal. Appl.
}\textbf{19 }(2013), 167-179.

\bibitem {Kanwal}Kanwal, R.P., \textit{Generalized Functions: Theory and
Technique,} Third Edition, Birkh\"{a}user, Boston, 2004.

\bibitem {KohLi}Koh, E. L. and Li, C. K., On distributions \ $\delta^{k}$ and
$(\delta^{\prime})^{k}$, \textit{Math. Nachr.} \textbf{157} (1992), 243-248.

\bibitem {Li}Li, C. K., An approach for distributional products on
$\mathbb{R}^{n}$, \textit{Integ. Trans. Spec. Funct.}\textbf{16} (2005), 139-151.

\bibitem {Lojasiewics}\L ojasiewicz, S., Sur la valeur et la limite d'une
distribution en un point, \textit{Studia Math. }\textbf{16} (1957), 1-36.

\bibitem {L2}\L ojasiewicz, S., Sur la fixation de variables dans une
distribution, \textit{Studia Math. }\textbf{17} (1958), 1-64.

\bibitem {Natanson}Natanson, I. P., \textit{Theory of Functions of a Real
Variable,} vol. 2, Frederick Ungar Publishing, New York, 1960.

\bibitem {Ozcag}\"{O}z\c{c}a\u{g}, E., G\"{u}len, \"{U}. and Fisher, B., On
the distribution $\delta_{+}^{k}$, \textit{Integ. Trans. Spec. Funct.}
\textbf{9} (2000), 57-64.

\bibitem {Peetre}J. Peetre, On the value of a distribution at a point,
\textit{Portugaliae Math. }\textbf{27} (1968), 149--159.

\bibitem {pil}Pilipovi\'{c}, S., Stankovi\'{c}, B., and Taka\v{c}i, A.,
\textit{Asymptotic Behavior and Stieltjes Transformation of Distributions,}
Teubner-Texte zur Mathmatik, Leipzig, 1990.

\bibitem {PSV}Pilipovi\'{c}, S. , Stankovi\'{c}, B., and Vindas, J.,
\textit{Asymptotic Behavior of Generalized Functions,} World Scientific,
Singapore, 2011.

\bibitem {Sasane}Sasane, A., A summation method based on the Fourier series of
periodic distributions and an example arising in the Casimir effect,
\textit{Indag. Math. }\textbf{31 }(2020), 477-504.

\bibitem {Schwartz}Schwartz, L., \textit{Th\`{e}orie des Distributions,
}Second edition, Hermann, Paris, 1966.

\bibitem {Treves}Treves, F., \textit{Topological Vector Spaces, Distributions,
and Kernels,} Academic Press, New York, 1967.

\bibitem {VEjump}Vindas, J. and Estrada, R., On the jump behavior of
distributions and logarithmic averages, \textit{J. Math. Anal. Appls.
}\textbf{347 }(2008), 597-606.

\bibitem {Vladimirovbook}Vladimirov, V. S.,\textit{Methods of the theory of
generalized functions,} Taylor \& Francis, London, 2002.

\bibitem {vla}Vladimirov, V. S., Drozhzhinov, Y. N., and Zavialov, B. I.,
\textit{Tauberian theorems for generalized functions,} Kluwer Academic
Publishers Group, Dordrecht, 1988.
\end{thebibliography}
\end{document}